\documentclass[12 pt]{article}
\usepackage{amsmath}
\usepackage{amsthm}
\usepackage{graphicx}
\usepackage{bbm,amssymb}
\usepackage[hyperfootnotes=false,colorlinks]{hyperref}
\usepackage{url}

\addtolength{\oddsidemargin}{-.0in}
\addtolength{\evensidemargin}{-.0in}
\addtolength{\textwidth}{.0in}

\def\th{\theta}

\def\d{\delta}
\def\Dl{\Delta}
\def\ep{\epsilon}
 
\def\Im{{\rm{Im}}\,} 
\def\a{\alpha}

\def\b{\beta}

\def\G{\Gamma}
\def\g{\gamma}

\def\all{\forall}
\def\xst{\exists}
\def\bksl{\backslash}

\def\equivto{\Longleftrightarrow}
\def\lang{\left\langle}
\def\goto{\rightarrow}
\def\oo{\infty}

\def\rang{\right\rangle}
\def\tld{\tilde}

\def\cd{\cdot}
\def\de{\partial}

\def\inv{^{-1}}
\def\ran{\operatorname{ran}}

\def\spn{\operatorname{span}}

\newcommand{\rc}[1]{\frac{1}{#1}} 

\newcommand{\R}{{\mathbb{R}}}
\newcommand{\C}{{\mathbb{C}}}
\newcommand{\Z}{{\mathbb{Z}}}
\newcommand{\N}{{\mathbb{N}}}

\def\H{{\mathbb{H}}}

\def\Cn{\C^n}

\def\Zn{\Z^n}

\newcommand{\Legendre}[2]{{\left(\frac{#1}{#2}\right)}}

\def\bx{{\mathbf{x}}}
\def\bxi{{\mathbf{\xi}}}
\def\ba{{\mathbf{a}}}
\def\bh{{\mathbf{h}}}
\def\bm{{\mathbf{m}}}
\newtheorem{defi}{Definition}

\newtheorem{lem}{Lemma}

\newtheorem{rem}{Remark}

\newtheorem{con}{Conjecture}
\newtheorem{thm}{Theorem}
\newtheorem{ex}{Example}

\title{Gauss Sphere Problem with Polynomials}
\author{Fan Zheng \footnote{MIT, supported by Summer Program of Undergraduate Research}
}

\begin{document}

\maketitle
\renewcommand{\thefootnote}{}
Mathematics Subject Classifications: Primary 11P21; Secondary 11L03, 11L07
\renewcommand{\thefootnote}{\arabic{footnote}}

\maketitle
\renewcommand{\thefootnote}{}
\begin{center}

Mentor: Chenjie Fan

Project suggested by David Jerison
\renewcommand{\thefootnote}{\arabic{footnote}}
\end{center}

\begin{abstract}
This paper provides an estimate of the sum of a homogeneous polynomial $P$ of degree $\nu$ and mean zero over the lattice points inside a sphere of radius $R$. It is proved that
\[
\sum_{\bx \in \Z^3 \atop |\bx| \le R} P(\bx) = O_{\ep,P}(R^{\nu + 83/64 + \ep})
\]

\end{abstract}

\pagebreak
\tableofcontents

\section{Introduction: A Historical Review}
The Gauss Sphere Problem is a generalization of the famous Gauss Circle Problem to three dimensions. It asks about the number of lattice points in a sphere of radius $R$. While it is easy to see that the leading term is asymptotically $4\pi R^3/3$, estimation of the error term is still open. Compared to the two dimensional case, the Gauss Sphere Problem has received less attention, yet it bears relation with a variety of topics in analytic number theory: average class numbers of negative discriminants, estimates of $L$-functions (see the introduction of \cite{CI}) and Fourier coefficients of modular forms (see \cite{CCU}), to name a few.

The trivial observation, already known to Gauss, that the error arises only from a shell of constant thickness on the surface of the ball, provides an error term of $CR^2$, where $C$ is an effective constant.

The first breakthrough was made by Van der Corput, who used Poisson summation formula to transform the lattice in physical space to the one in frequency space. In this way, counting lattice points translates to summing the Fourier transform of $\chi_{B(R)}$, the characteristic function of the ball of radius $R$. Although the sum itself diverges, it can be brought convergent by smoothing out $\chi_{B(R)}$, or equivalently multiplication of a cut-off function in the frequency space. The width $H$ of the range to be smoothed, and correspondingly the radius $H\inv$ in the frequency space to be summed, is a parameter that can be adjusted to obtain an optimal error bound. Using trivial estimates, Van der Corput obtained an error $CRH\inv$ for the sum in the frequency space, and an error $CR^2H$ from the smoothing in the physical space. Balancing these two errors by setting $H = R^{-1/2}$ an error of $CR^{3/2}$ for the Gauss Sphere Problem was reached.

The basic structure of Van der Corput's argument has remained unchanged ever since. Subsequent improvements came from a more careful analysis of both error terms. In the following discussions we shall use Vinogradov's notations:

\begin{defi}
$f = O(g) \equivto f \ll g \equivto g \gg f \equivto \xst C > 0$ such that $f \le Cg$. $f \asymp g \equivto f \ll g$ and $g \ll f$. Subscripted variables attached to these symbols mean that the implicit constant depends on the variables.
\end{defi}

In addition we introduce a short hand for the exponential function.

\begin{defi}
We define $e(z) = \exp(2\pi iz)$ for $z \in \C$.
\end{defi}

The frequency side was first exploited for improvements, because the Fourier transform of $\chi_{B(r)}$ turns out to be (up to some constants)
\[
R\frac{e(R|\xi|)}{|\xi|^2}
\]
Since the denominator can be removed by Abel summation, the summation in the frequency space essentially involves the partial sum of the exponentials
\[
\sum_{\xi \in \Z^3 \atop |\xi| \le N} e(R|\xi|)
\]
Of the numerous ways invented to deal with such exponential sums, the two most important ones are Van der Corput $A$ and $B$ processes. The $A$ process is also known as Weyl differencing,. The $B$ process is Poisson summation and stationary phase, much in the same spirit as discussed above. These two processes have been abstracted to give the method of exponent pairs. For more details the reader is referred to Appendix B and \cite{GK}.

Improvements in the frequency space culminated in the works of Chen \cite{Ch} and Vinogradov \cite{Vi}, in which they independently improved the estimate of the exponential sum to $O_{\ep}(RH^{-1/2 + \ep})$ in a sufficiently large range so that it can be balanced with $O(R^2H)$ to give an error estimate of $O_\ep(R^{4/3 + \ep})$.

The results of Chen and Vinogradov stood for another thirty years before Chamizo and Iwaniec turned their attention to the physical space. Using character sums, they improved the trivial bound $O(R^2H)$ on the error caused by smoothing to $O_\ep(R^{15/8 + \ep} H^{7/8})$, and thus lowered the 4/3 in the exponent further down to 29/22 \cite{CI}. Currently, the world record on this problem is $O_\ep(R^{21/16 + \ep})$, obtained by Heath-Brown \cite{HB} in much the same way as \cite{CI}. His step forward is an improved error bound $O_\ep(R^{11/6 + \ep} H^{5/6})$ of the character sum.

Last but not least, it should be mensioned that current techniques are still insufficient to prove the famous conjecture that the true error bound should be $O_\ep(R^{1 + \ep})$.

\section{The Method of Chamizo and Iwaniec: Extensions, Improvements and Limitations}
Chamizo and Iwaniec's method has gained popularity in a number of related problems in recent years. Aside from the above-mentioned \cite{HB}, the reader is referred to \cite{CC} and \cite{CCU} for more examples, and to \cite{C} for a non-technical account. This paper provides another application of this method, along with some improvements. The problem considered here is summing a homogeneous polynomial of zero mean on the sphere. In other words, we provide an estimate for
\begin{equation}\label{sum}
\sum_{\bx \in \Z^3 \atop |\bx| \le R} P(\bx)
\end{equation}
where $P$ is a homogeneous polynomial in three variables such that
\[
\int_{|\bx|=R} P(\bx) = 0
\]
It is well-known (see Corollary 2.50 of \cite{Fo}, for example) that $P$ can be written as a linear combination of spherical harmonics times powers of $|\bx|^2$. The zero mean of $P$ ensures that the spherical harmonics involved are all non-constant. Therefore, it suffices to consider only harmonic homogeneous polynomial of degree $\nu > 0$.

Since $P$ has zero mean, there is no main term of the form $cR^{\nu+3}$ as in the Gauss Sphere Problem. Thus the natural form of the estimate is
\[
\sum_{\bx \in \Z^3 \atop |\bx| \le R} P(\bx) = O_{\ep,P}(R^{\nu + \th_\nu + \ep})
\]
where we have taken into account of the fact that $\sup_{|\bx|=R} P(\bx) \asymp R^\nu$.

Normally, we would expect that (by some abuse of notation) $\th_\nu = \th_0$. In other words, the error in estimating the sum of a homogeneous polynomial of degree $\nu$ scales according to the degree of the polynomial. For example, Van der Corput's estimate
\[
\sum_{\bx \in \Z^3 \atop |\bx| \le R} 1 = \frac{4\pi R^3}{3} + O(R^{3/2})
\]
easily generalizes to (see the proof of Lemma 3.5 (c) in \cite{J})
\[
\sum_{\bx \in \Z^3 \atop |\bx| \le R} P(\bx) = O_P(R^{\nu+3/2})
\]

Naively, we would expect Heath-Brown's record-keeping result to give an error of $O_{\ep,P}(R^{\nu + 21/16 + \ep})$ for our problem. While this is true, the exponent 21/16 can actually be lowered to 83/64 if we examine Heath-Brown's method more carefully and try to adapt it to the current case.

As mentioned in the previous section, Heath-Brown decomposed the sum over lattice points into two parts, the ``long sum" and the ``short sum". The long sum is essentially a smoothed version of the original sum:
\[
S_f(R) = \sum_{\bx \in \Z^3} \rc{|\bx|} f(|\bx|)
\]
where $f$ is a cutoff function growing like $|\bx|$ when $0 \le |\bx| \le R$ and decreasing linearly to 0 when $R \le |\bx| \le R + H$, and $H$ is the ``width" of the cutoff function $f$, a parameter tunable for optimal bounds.

On the other hand, the short sum is
\[
S_f(R, H) = \sum_{\bx \in \Z^3 \atop R \le |\bx| \le R+H} \rc{|\bx|} f(|\bx|)
\]
The actual sum (\ref{sum}), in the special case $P = 1$, is then the difference between $S_f(R)$ and $S_f(R, H)$, which are estimated in different ways.

The estimation of the long sum $S_f(R)$ essentially follows \cite{CI}. More precisely, in the notation in \cite{GK}, the operation $ABAB$ is performed to $S_f(R)$ to obtain the following estimate (thanks to \cite{CCU} for pointing out a misprint in \cite{CI})
\begin{equation}\label{CIS_fR}
S_f(R) = \frac{4\pi R^3}{3} + 2\pi HR^2 + O_\ep((RH^{-1/2} + R^{11/8}H^{1/8} + R^{21/16})H^{-\ep})
\end{equation}

The short sum, on the other hand, is converted to a character sum, an idea dated back to Gauss (see (1.2) of \cite{CI}). Its estimation is made by Chamizo and Iwaniec, and improved by Heath-Brown to (see (3) in \cite{HB})
\[
S_f(R, H) = 2\pi R^2H + O_\ep((R^{11/6}H^{5/6} + R^{19/15} + R^{7/6}H^{-1/6})R^{\ep})
\]
Balancing $S_f(R)$ and $S_f(R, H)$ by setting $H = R^{-5/8}$, we obtain Heath-Brown's bound
\[
\sum_{\bx \in \Z^3 \atop |\bx| \le R} 1 = \frac{4\pi R^3}{3} + O_\ep(R^{21/16+\ep})
\]
where, as Heath-Brown has remarked, the exponent 21/16 comes from trading the $H$ factors in the term $RH^{-1/2}$ in $S_f(R)$ and the term $R^{11/6}H^{5/6}$ in $S_f(R, H)$. The term $R^{21/16}$ in $S_f(R)$, on the other hand, is not optimal and has some room for improvement.

The approach taken in this paper is essentially the same as that in \cite{CI} and \cite{HB}, but two differences should be remarked, the first one more fundamental than the second.

The first difference is in the estimate of the short sum. While character sum is used in the case $P = 1$, in our case when $P$ is a harmonic homogeneous polynomial of degree $\nu > 0$, modular forms come into play. In more detail, let
\[
a_n = \sum_{\bx \in \Z^3 \atop |\bx|^2 = n} P(\bx)
\]
and
\[
\th(z) = \sum_{n \in \N} a_n e(nz)
\]
It is known (Proposition 2.1 of \cite{Sh}, or Example 2, P 14 of \cite{Sa}) that $\th$ is a cusp form of weight $k = \nu + 3/2$ with respect to the congruence group
\[
\G_0(4) = \left\{
\begin{pmatrix}
a & b\\
c & d
\end{pmatrix} \in SL_2(\Z), 4|c\right\}
\]
(when $P = 1$, $\th$ is a modular form, but not a cusp form.) Therefore some powerful estimates of the Fourier coefficients $a_n$ of cusp forms are available, of which we use Blomer-Harcos' bound (Corollary 2 of \cite{BH})
\[
a_n \ll_{\ep,\th} n^{k/2 - 5/16 + \ep} (n, 2^\oo)^{5/8} \asymp R^{\nu + 7/8 + \ep} (n, 2^\oo)^{5/8}
\]
and the triangle inequality to obtain Theorem \ref{S_fPRH}:
\begin{equation}\label{shortsum}
S_{f,P}(R,H) = O_{\ep,P}(R^{\nu+\ep} (R^{15/8}H + R))
\end{equation}
which wins a factor of $R^{1/8}$ over the trivial short sum estimate $O_f(R^{\nu+2}H)$. (The GCD term, which means the largest power-of-two factor of $n$, is absorbed in the process of summation.)

The second difference is a slight improvement of the estimate (\ref{CIS_fR}), which is now necessary because the previously-mentioned improvements on the short sum pushes $H$ up, making $R^{21/16}$ dominate $RH^{-1/2}$ in the long sum, so removing the $R^{21/16}$ is necessary (and possible, as already hinted by Heath-Brown). This is achieved by optimizing the two Weyl differencing steps ($A$ processes). Specifically, in \cite{CI}, the lengths of the first and the second Weyl differecing are set to $N^{1/2-\ep}$ and $U$. This, however, is not optimal in all cases, especially when $N \asymp R^{6/5}$, for which the off-diagonal term actually dominates the diagonal term. This paper improves on this by tuning the lengths of the two Weyl differencing steps ($Y$ in Lemma \ref{V_NQR} and $T$ in Lemma \ref{V_NUyR}), if possible, to balance the diagonal and off-diagonal terms. Our new estimate is Theorem \ref{S_fPR}, which for any homogeneous polynomial $P$ of degree $\nu$ having zero mean on the sphere, says:
\begin{equation}\label{longsum}
S_{f,P}(R) = O_{\ep,P}(R^\nu(RH^{-1/2} + R^{17/14} H^{-1/7})H^{-\ep})
\end{equation}

Now let's see the effect of balancing (\ref{shortsum}) and (\ref{longsum}). Let's ignore the common factor $R^\nu$ and  any factor of the form $R^\ep$ or $H^{-\ep}$. Moreover, let's assume that in the short sum the term $R^{15/8}H$ dominates (the other term actually corresponds to the conjecture $\th_\nu = 1$), and that in the long sum the term $R^{17/14} H^{-1/7}$ dominates (which is the very reason we want to optimize it). We obtain
\[
R^{17/14} H^{-1/7} = R^{15/8} H
\]
which gives $H = R^{-37/64}$ and
\[
\sum_{\bx \in \Z^3 \atop |\bx| \le R} P(\bx) = S_{f,P}(R) - S_{f,P}(R, H) = O_{\ep,P}(R^{\nu + 83/64 + \ep})
\]
The second term in $S_{f,P}(R,H)$ is clearly dominated, while the first term in $S_{f,P}(R)$ is
\[
O_{\ep,P}(RH^{-1/2}) = O_{\ep,P}(R^{\nu + 165/128 + \ep})
\]
which is also dominated. Therefore we have succeeded in showing $\th_\nu \le 1 + 19/64$, pushing Heath-Brown's exponent (for the purpose of our problem) down by 1/64, or 5\% in relative terms. In Appendix B we use a recent result of Huxley \cite{Hu} to sketch a proof that $\th_\nu$ can be further reduced to $1 + 35765/121336$.

Finally let's see how far we could possibly go from here. Assuming Lindel\"of Exponent Pair Conjecture \cite{CC}, we can substitute $k = \ep$ and $l = 1/2 + \ep$ in (\ref{exppair}) in Appendix B to reach $\th_\nu \le 1 + 7/24$, coming from the term $RH^{-1/2}$ in the long sum, which can be traced back to the diagonal term in the first Weyl differencing step. This is probably the best one can expect of the long sum, unless it is redone from scratch in a radically different way.

On the other hand, improvements on the short sum may have a larger impact. If we assume Ramanujan's conjecture on modular forms of half integral weight, i.e. (ignoring all the $\ep$'s in the exponent)
\[
|a_n| \ll_f n^{k/2 - 1/2} \asymp R^{\nu + 1/2}
\]
and use the triangle inequality we get
\[
\sum_{n=R^2}^{(R+H)^2} |a_n| \ll_f R^{\nu + 3/2}H
\]
Balancing this result with Theorem \ref{S_fPR} we obtain a significant better result $\th_\nu \le 1 + 1/4$. If we use a general exponent pair $(k, l)$ provided by Theorem \ref{NewS_fPR} we obtain an error bound of
\[
O_{\ep,P}(R^{7/6} + R^{1 + \frac{3k+3l+1}{10k+10l+6}})
\]
The pair in \cite{Hu} yields $\th_\nu \le 1 + 1409/5790$. Assuming the Lindel\"of conjecture we can get $\th_\nu \le 1 + 5/22$.

Finally we state a conjecture analogous to the famous Gauss Sphere Problem.

\begin{con}\label{GaussConj}
$\th_\nu = 1$. In other words. Suppose $P$ is a homogeneous polynomial of degree $\nu$ in three variables and $P$ has zero mean on the sphere, then
\[
\sum_{\bx \in \Z^3 \atop |\bx| \le R} P(\bx) = O_{\ep,P}(R^{\nu + 1 + \ep})
\]
\end{con}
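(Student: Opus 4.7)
The plan is to push the long-sum/short-sum decomposition of Chamizo-Iwaniec and Heath-Brown, reused throughout the paper, to its conjectural limit. On the long side, I would invoke the Lindel\"of Exponent Pair Conjecture, substituting $(k,l) = (\ep, 1/2+\ep)$ into the $ABAB$ analysis of $S_{f,P}(R)$, which makes the diagonal term $RH^{-1/2}$ dominant, as the paper already observes. On the short side, I would assume Ramanujan's conjecture for the half-integral weight cusp form $\th(z)$ on $\G_0(4)$, giving $|a_n| \ll_{\ep} n^{k/2 - 1/2 + \ep}$; the triangle inequality then reduces the short sum to $O_{\ep,P}(R^{\nu + 3/2 + \ep} H)$.

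Balancing these two conditional inputs yields only the bound $\th_\nu \le 1 + 5/22$ computed earlier in the introduction. Closing the remaining $5/22$ gap is the essential obstacle, and it cannot be achieved by any further pointwise improvement staying within the triangle-inequality framework: one must exhibit cancellation among the coefficients $a_n$ across short intervals of length $\asymp RH$. A natural strategy is to insert a Petersson or Kuznetsov trace formula on $\G_0(4)$, expressing $\sum_{R^2 \le n \le (R+H)^2} a_n$ as a sum of Kloosterman sums weighted by half-integral weight Bessel transforms, and then to extract cancellation over the Kloosterman moduli via Weil-type bounds. Morally, what one needs is a short-interval analog of Deligne's theorem for $\th$, closely related to GL(2) subconvexity in a regime beyond current reach.

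The hard part is therefore intrinsic rather than technical: even the strongest plausible pointwise inputs leave a residual exponent of roughly $1/4$, because smoothing in physical space costs a full factor of $H$ whereas the frequency side refunds only $H^{1/2}$. Any route to $\th_\nu = 1$ must exploit genuine oscillation in the short sum and therefore escape the triangle inequality. As a backup plan one could attempt to bypass the split entirely, for instance by applying Poisson summation directly to (\ref{sum}) with the spherical harmonic absorbed into the Fourier transform via the Bochner-Hecke identity, and then analyzing the resulting Bessel-weighted lattice sum by stationary phase; but the central difficulty --- square-root cancellation for $a_n$ in short intervals --- would still govern the final bound, so I do not expect this conjecture to fall without a genuinely new idea on the analytic side of cusp forms of half-integral weight.
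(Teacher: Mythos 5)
The statement you were asked about is Conjecture \ref{GaussConj}: the paper does not prove it, and indeed states explicitly that current techniques are insufficient, so there is no proof of record to compare yours against. Your submission, read honestly, is not a proof either, and you say as much: under the Lindel\"of Exponent Pair Conjecture on the long sum and the Ramanujan conjecture for the half-integral weight cusp form on the short sum, balancing $RH^{-1/2} + R^{6/5}H^{-1/10}$ against $R^{3/2}H$ gives only $\th_\nu \le 1 + 5/22$ (the $27/22$ entry), exactly the conditional endpoint the paper records in Section 2 and in the Appendix C table via Theorem \ref{NewS_fPR} and Theorem \ref{S_fPRH}. So your conditional computation is correct and coincides with the paper's own, but it establishes a bound strictly weaker than $\th_\nu = 1$, and the remaining ideas you offer (a Petersson/Kuznetsov expansion of $\sum_{R^2 \le n \le (R+H)^2} a_n$ with Weil bounds, or bypassing the split via Bochner--Hecke and stationary phase) are speculative directions, not arguments: no estimate is derived from them, and you yourself note that the required short-interval cancellation for the coefficients $a_n$ lies beyond what these tools currently deliver.

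Your diagnosis of \emph{why} the method stalls --- that every route in the paper bounds the short sum by the triangle inequality on $|a_n|$, paying a full factor of $H$ in physical space while the frequency side refunds only $H^{1/2}$, so that $\th_\nu = 1$ would require genuine oscillation among the $a_n$ over intervals of length $\asymp RH$ --- is accurate and consistent with the paper's own framing of the obstruction. But to be clear about status: nothing in your proposal closes, or could with present inputs close, the gap between $1 + 5/22$ and $1$; the conjecture remains open, and your text should be presented as an assessment of the obstacle rather than as a proof attempt.
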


A tabulated summary of all the proved and conjectured exponents mentioned above can be found in Appendix C.

\section{Converting the Long Sum to the Exponential Sum}
Suppose $P$ is any homogeneous polynomial of degree $\nu > 0$ in three variables, not necessarily having zero mean on the sphere.

\begin{defi}
The ``long sum" refers to the following sum:
\[
S_{f,P}(R) = \sum_{\bx \in \Z^3} g_P(\bx)
\]
where $g_P(\bx) = P(\bx) g(\bx)$, $g(\bx) = f(|\bx|)/|\bx|$, and $f$ is the cutoff function
\[
f(x) = 
\begin{cases}
x,\, x \in [0, R]\\
R(R+H-x)/H,\, x \in [R, R+H]\\
0,\, x \ge R+H
\end{cases}
\]
\end{defi}

\begin{lem}\label{Poissonsum}
\begin{align*}
&S_{f,P}(R)\\
= &\int_{\R^3} P(\bx) g(\bx) d\bx + \sum_{\nu_1 + \nu_2 = \nu} R^{\nu_1} \sum_{\bxi \in \Z^3 \bksl 0} \frac{Q_{\nu_1}(\xi) \sin(2\pi R|\xi| + \frac{\pi \nu_1}{2})}{|\xi|^{3 + \nu_1 + 2\nu_2}}\\
+ &\sum_{\nu_1 + \nu_2 + \nu_3 = \nu} H^{\nu_1} (2R + H)^{\nu_2} \sum_{\bxi \in \Z^3 \bksl 0} \frac{\tld Q_{\nu_1, \nu_2}(\xi) \sin(\pi H|\xi| + \frac{\pi \nu_1}{2}) \cos(\pi(2R + H)|\xi| + \frac{\pi \nu_2}{2})}{|\xi|^{3 + \nu_1 + \nu_2 + 2\nu_3}}
\end{align*}
where $Q_{\nu_1}$ and $\tld Q_{\nu_1,\nu_2}$ are homogeneous polynomials of degree $\nu$.
\end{lem}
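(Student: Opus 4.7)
The plan is to apply Poisson summation: since $g_P$ is Lipschitz with compact support, $S_{f,P}(R) = \sum_{\bxi \in \Z^3}\widehat{g_P}(\bxi)$. The $\bxi = 0$ term is $\widehat{g_P}(0) = \int_{\R^3} P(\bx) g(\bx)\,d\bx$, accounting for the first term on the right-hand side. For $\bxi \neq 0$ I invoke the operator identity $\widehat{g_P}(\bxi) = P\bigl(-(2\pi i)^{-1}\nabla_\bxi\bigr)\hat g(\bxi)$, which follows from $P(\bx)\,e(-\bx\cdot\bxi) = P\bigl(-(2\pi i)^{-1}\nabla_\bxi\bigr)e(-\bx\cdot\bxi)$ by differentiating under the integral. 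This reduces the task to first computing $\hat g$ and then applying a scalar differential operator of order $\nu$.

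Since $g(\bx) = f(|\bx|)/|\bx|$ is radial, the three-dimensional radial Fourier formula gives $\hat g(\bxi) = (2/|\xi|)\int_0^\infty f(r)\sin(2\pi r|\xi|)\,dr$. Splitting at $r = R$ and integrating by parts twice in each piece (using $f' = 1$ on $[0,R]$ and $f' = -R/H$ on $[R, R+H]$), the leading $R\cos(2\pi R|\xi|)/|\xi|^2$ contributions from the two pieces cancel exactly, leaving
\[
\hat g(\bxi) = \frac{\sin(2\pi R|\xi|)}{2\pi^2|\xi|^3} - \frac{R\,\sin(\pi H|\xi|)\cos(\pi(2R+H)|\xi|)}{\pi^2 H|\xi|^3}.
\]
This is the $\nu = 0$ case of the claimed formula, its two summands corresponding to the two sums in the lemma.

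It remains to apply $P\bigl(-(2\pi i)^{-1}\nabla_\bxi\bigr)$ to each of the two terms above. By the chain rule, $\partial_j\sin(a|\xi|) = a\sin(a|\xi| + \pi/2)\,\xi_j/|\xi|$ and $\partial_j|\xi|^{-n} = -n\,\xi_j|\xi|^{-n-2}$. Iterating and grouping the $\nu$ derivatives by which factor each hits, the first term of $\hat g$ yields a sum indexed by $\nu_1$, the number of derivatives landing on $\sin(2\pi R|\xi|)$: each such derivative contributes $2\pi R\cdot\xi_j/|\xi|$ and advances the sine phase by $\pi/2$, producing a prefactor $R^{\nu_1}\sin(2\pi R|\xi| + \pi\nu_1/2)$, while the remaining $\nu_2 = \nu - \nu_1$ derivatives act on polynomial and power-of-$|\xi|$ factors, producing a homogeneous polynomial $Q_{\nu_1}(\xi)$ of degree $\nu$ divided by $|\xi|^{3+\nu_1+2\nu_2}$. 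The second term of $\hat g$ is handled identically, indexed instead by the numbers of derivatives hitting $\sin(\pi H|\xi|)$ (contributing $H^{\nu_1}$ and phase $\pi\nu_1/2$), $\cos(\pi(2R+H)|\xi|)$ (contributing $(2R+H)^{\nu_2}$ and phase $\pi\nu_2/2$), and polynomial/power-of-$|\xi|$ factors ($\nu_3 = \nu - \nu_1 - \nu_2$ derivatives, producing an additional $|\xi|^{-2\nu_3}$ in the denominator).

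The main obstacle is the combinatorial bookkeeping in this last step. One must verify that (i) the $i^{-\nu}$ factor from $(2\pi i)^{-\nu}$ in the operator combines with the per-derivative $\pi/2$ phase shifts (via $i = e^{i\pi/2}$) to give exactly $\sin(\cdot + \pi\nu_1/2)$ in the sine arguments; (ii) any polynomial contributions of degree lower than $\nu$ that arise when a derivative hits a $\xi$ factor produced by an earlier derivative, rather than a fresh power or trig factor, are absorbed by multiplying and dividing by $|\xi|^2$ into homogeneous polynomials of degree exactly $\nu$ with correspondingly adjusted denominator exponents; and (iii) the final $|\xi|$-exponents are indeed $3 + \nu_1 + 2\nu_2$ and $3 + \nu_1 + \nu_2 + 2\nu_3$. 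A clean organization is by induction on $\nu$, using the explicit $\hat g$ above as the base case and checking that the stated form is preserved under one further application of $\partial_j/(2\pi i)$.
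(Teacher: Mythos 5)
Your proposal follows essentially the same route as the paper: Poisson summation applied to $g_P$, the explicit formula for $\hat g$ (the paper cites (5.2) of Chamizo--Iwaniec rather than rederiving it via the radial Fourier formula and integration by parts, but the formula is the same), the operator identity $\widehat{g_P}(\bxi)=P\bigl(-(2\pi i)^{-1}\nabla_\bxi\bigr)\hat g(\bxi)$, and then an induction on $\nu$ to organize the derivatives into the two indexed families with the stated phases, prefactors $R^{\nu_1}$ resp.\ $H^{\nu_1}(2R+H)^{\nu_2}$, and denominators $|\xi|^{3+\nu_1+2\nu_2}$ resp.\ $|\xi|^{3+\nu_1+\nu_2+2\nu_3}$. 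Your bookkeeping points (i)--(iii), including absorbing lower-degree contributions by multiplying and dividing by $|\xi|^2$, are exactly what the paper's inductive step does implicitly.

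One caution: your opening justification of Poisson summation --- ``$g_P$ is Lipschitz with compact support'' --- is not adequate in dimension three. The frequency-side series is only conditionally convergent (e.g.\ the $\nu_1=\nu$ terms have numerator of degree $\nu$ against $|\xi|^{3+\nu}$, so the terms decay like $|\xi|^{-3}$ and are not absolutely summable over $\Z^3$), so one must specify the mode of summation and prove convergence; compact support plus Lipschitz regularity does not by itself give pointwise Poisson summation with spherical partial sums. The paper is aware of this and explicitly defers the justification to Theorem \ref{S_fPR}, where the twist by $\bh$, the uniform convergence of the partial sums furnished by Lemma \ref{sumxi}, and $L^2$ convergence of the Fourier series are combined to legitimize the identity at $\bh=0$. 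If you either adopt the same deferral or interpret the sums as limits of spherical partial sums and invoke that later argument, the rest of your proof stands.
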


\begin{proof}
By Poisson summation formula applied to $g$ (whose validity will be justified later in Theorem \ref{S_fPR})
\[
S_{f,P}(R) = \sum_{\bxi \in \Z^3 } \hat g_P(\bxi)
\]
where the convention of the Fourier transform taken here is
\[
\hat f(\xi) = \int_{\R^3} f(\bx) e(-\xi \cd \bx)
\]
and $e(z)$ is defined to be $\exp(2\pi iz)$ for all $z \in \C$.

If $\xi = 0$, then
\[
\hat g_P(\xi) = \int_{\R^3} g_P(\bx) d\bx
\]
is the integral of the smoothed function $g_P$ over $\R^3$, which contributes to the main term of the estimate of the long sum, and which actually vanishes if $P$ has zero mean on the sphere.

If $\xi \neq 0$, then by (5.2) in \cite{CI}, we have
\[
\hat g(\xi) = \frac{\sin(2\pi R |\xi|)}{2\pi^2 |\xi|^3} - \frac R H \frac{\sin(\pi H|\xi|)}{\pi^2 |\xi|^3} \cos(\pi(2R + H)|\xi|)
\]
so
\[
\hat g_P(\xi) = P\left( \frac{-\de_\xi}{2\pi i} \right) \left( \frac{\sin(2\pi R |\xi|)}{2\pi^2 |\xi|^3} - \frac R H \frac{\sin(\pi H|\xi|) \cos(\pi(2R + H)|\xi|)}{\pi^2 |\xi|^3} \right)
\]

By induction on $\nu$, we can show that
\[
P(\de_\xi) \left( \frac{\sin(2\pi R |\xi|)}{|\xi|^3} \right) = \sum_{\nu_1 + \nu_2 = \nu} R^{\nu_1} \frac{Q_{\nu_1}(\xi) \sin(2\pi R|\xi| + \frac{\pi \nu_1}{2})}{|\xi|^{3 + \nu_1 + 2\nu_2}}
\]
and
\begin{align*}
&P(\de_\xi) \left( \frac{\sin(\pi H|\xi|) \cos(\pi(2R + H)|\xi|)}{|\xi|^3} \right)\\
= &\sum_{\nu_1 + \nu_2 + \nu_3 = \nu} H^{\nu_1} (2R + H)^{\nu_2} \frac{\tld Q_{\nu_1, \nu_2}(\xi) \sin(\pi H|\xi| + \frac{\pi \nu_1}{2}) \cos(\pi(2R + H)|\xi| + \frac{\pi \nu_2}{2})}{|\xi|^{3 + \nu_1 + \nu_2 + 2\nu_3}}
\end{align*}
where $Q_{\nu_1}$ and $\tld Q_{\nu_1,\nu_2}$ are homogeneous polynomials of degree $\nu$.
\end{proof}

Now if we sum over $\xi \in \Z^3 \bksl 0$ and use trigonometric identities, we get sums of the following type
\[
\sum_{\xi \in \Z^3 \bksl 0} \frac{Q(\xi) e(R|\xi|)}{|\xi|^m},\, {\rm or}\, \sum_{\xi \in \Z^3 \bksl 0} \frac{Q(\xi) e((R + H)|\xi|)}{|\xi|^m}
\]
where $Q$ is a homogeneous polynomial of degree $\nu$. By Abel summation, it boils down to estimating sums of the following type
\[
\sum_{\xi \in \Z^3 \atop |\xi|^2 \le N} Q(\xi) e(R|\xi|)
\]

This is the sort of exponential sum that will play a key role in the estimation of the long sum.

\section{Estimating the Exponential Sum}
In this section, $Q$ denotes a homogeneous polynomial of degree $\nu \ge 0$ in three variables, not necessarily having zero mean on the sphere. To justify the convergence of Poisson summation later, we introduce a variable $\bh \in \Z^3$. Note however that all the estimates in this section are uniform in $\bh$, which implies uniform convergence of the partial sums of the Fourier series.

\begin{defi}
The exponential sum we are going to estimate is the following:
\[
V_{N,Q,\bh}(R) = \sum_{\xi \in \Z^3 \atop |\xi|^2 \le N} Q(\xi) e(R|\xi| + \bh \cd \xi)
\]
\end{defi}

\begin{lem}\label{V_NQR}
If $R \ge 1$ then
\[
V_{N,Q,\bh}(R) \ll_{\ep,Q} N^{\nu/2 + \ep} (\min\{N^{3/2}, N^{5/4} + N^{15/14} R^{3/14}\})
\]
\end{lem}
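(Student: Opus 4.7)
The bound packages three estimates: the trivial triangle bound $N^{\nu/2 + 3/2}$, a diagonal refinement $N^{\nu/2 + 5/4 + \ep}$, and an $R$-sensitive off-diagonal refinement $N^{\nu/2 + 15/14 + \ep} R^{3/14}$. The trivial bound is immediate from $|Q(\xi)| \ll_Q N^{\nu/2}$ on $|\xi|^2 \le N$ together with $\#\{\xi \in \Z^3 : |\xi|^2 \le N\} \ll N^{3/2}$, and secures the $N^{3/2}$ branch of the minimum.

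For the refinements I invoke Van der Corput's $AB$ scheme. The $A$-step is a three-dimensional Weyl differencing with shift parameter $Y \ge 1$ (the $Y$ announced in the introduction): averaging over $\eta \in \Z^3$ with $|\eta| \le Y$ and applying Cauchy--Schwarz to the shifted sums yields
\[
|V_{N,Q,\bh}(R)|^2 \ll \frac{N^{3/2}}{Y^3} \sum_{|\eta| \le Y} |W(\eta)|,
\]
where
\[
W(\eta) = e(-\bh \cd \eta) \sum_{\substack{\xi \in \Z^3 \\ |\xi|^2,\,|\xi+\eta|^2 \le N}} Q(\xi)\, \conj{Q(\xi+\eta)}\, e\big(R(|\xi|-|\xi+\eta|)\big).
\]
The $\bh$-dependence collapses into a unimodular $\xi$-independent factor and drops out of $|W(\eta)|$, so all subsequent estimates are automatically uniform in $\bh$. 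The diagonal term $\eta = 0$ satisfies $|W(0)| \ll N^{\nu + 3/2}$, contributing $N^{\nu/2}\cdot N^{3/2}/Y^{3/2}$ to $|V|$; the particular choice $Y \sim N^{1/6}$ already extracts the $R$-free refinement $N^{\nu/2 + 5/4}$.

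For the off-diagonal terms $\eta \ne 0$ I carry out the $B$-step: Poisson summation in $\xi$ after smooth truncation to $B(\sqrt N)$, producing oscillatory integrals with phase $\Phi(\xi) = R(|\xi|-|\xi+\eta|) - 2\pi \ba \cd \xi$ for each dual $\ba \in \Z^3$. On $|\xi| \sim \sqrt N$ the phase $R(|\xi|-|\xi+\eta|)$ is, to leading order, the angular linear function $-R\eta \cd \hat\xi$: its gradient has magnitude $\sim R|\eta|/\sqrt N$, its Hessian has two angular eigenvalues of size $R|\eta|/N$ and a smaller radial eigenvalue of size $R|\eta|^2/N^{3/2}$, and the image of $\nabla_\xi$ is either two- or three-dimensional according to whether $R|\eta|^2/\sqrt N$ is $\ll 1$ or $\gg 1$. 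A careful stationary phase analysis --- splitting radial and angular directions, using Morse decay in the two angular variables and either trivial integration or integration by parts in the radial one --- bounds $|W(\eta)|$ polynomially in $R, |\eta|, N$ and uniformly in $\ba$. Summing over $|\eta| \le Y$ and over the active dual frequencies $\ba$ (whose count is dictated by the dimensionality of the gradient image) produces the off-diagonal contribution to $|V|$. Tuning $Y$ to balance this against the diagonal $N^{3/2}/Y^{3/2}$ extracts the second refinement $N^{\nu/2 + 15/14} R^{3/14}$, with the $N^\ep$ loss absorbing dyadic decompositions and Poisson-cutoff errors.

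The principal technical hurdle is the nearly-degenerate stationary phase in the radial direction: the classical Morse lemma does not apply there, so one must either parametrize $\xi = r \hat\xi$ explicitly and handle the radial integral by hand (trivial if $R|\eta|^2/\sqrt N \ll 1$, integration by parts otherwise), or appeal to uniform asymptotics for oscillatory integrals with a degenerate critical manifold, keeping constants uniform in $\eta$ and $\ba$. A secondary care point is the correct count of active dual frequencies $\ba$ as a function of $|\eta|$, since the image of $\nabla_\xi[R(|\xi|-|\xi+\eta|)]$ transitions from a 2-surface to a full 3-dimensional region as $R|\eta|^2/\sqrt N$ crosses unity; these two regimes must be matched carefully in order to recover precisely the exponents $15/14$ and $3/14$ in the final balance.
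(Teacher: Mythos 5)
Your trivial bound and the Weyl-differencing/Cauchy--Schwarz setup (including the observation that the $\bh$-dependence factors out of $|W(\eta)|$) are fine, but the proof stops exactly where the lemma's content begins. You never state, let alone prove, a quantitative bound for the off-diagonal sums $W(\eta)$, $\eta\ne 0$: the entire stationary-phase step is described qualitatively (gradient and Hessian scales, a two-regime dichotomy in $R|\eta|^2/\sqrt N$) and then deferred with the admission that the degenerate radial direction and the count of active dual frequencies ``must be matched carefully in order to recover precisely the exponents $15/14$ and $3/14$.'' Without an explicit estimate for $|W(\eta)|$ and the ensuing optimization in $Y$, neither the $N^{\nu/2+5/4}$ term nor the $N^{\nu/2+15/14}R^{3/14}$ term is established. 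The claim that $Y\sim N^{1/6}$ ``already extracts'' the $R$-free refinement is also misleading: the diagonal gives $N^{\nu/2+3/2}Y^{-3/2}$, but at that choice of $Y$ you must still show the $\sim Y^3$ off-diagonal terms contribute no more, i.e.\ $|W(\eta)|\ll N^{\nu+1}$ on average, which is a nontrivial cancellation statement you have not proved.

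Beyond the gap, be aware that your single three-dimensional $AB$ step is structurally weaker than what the paper actually uses. The paper first slices the ball (fixing $a,b$, with a cutoff $\chi_{a,b}$ and its Fourier transform) to reduce to one-dimensional sums $\sum_{|c|\le\sqrt N} e(R\sqrt{n+c^2}+\cdots)$, then runs a full $ABAB$ chain: Weyl differencing in $c$ with length $Y$, producing the two-variable sum $V_{N,D}(R)$ with phase $f(x,y)=R(\sqrt{x+y}-\sqrt x)$; a one-dimensional $B$ step (Poisson plus stationary phase) in $x$ (Lemma \ref{sumxy}); a second Weyl differencing with length $T$ and van der Corput/exponent-pair bounds in $y$ (Lemma \ref{V_NUyR}); and only then the balance $N^3Y^{-1}\asymp R^{1/2}N^2Y^{1/6}$, $Y\asymp N^{6/7}R^{-3/7}$, which is where $N^{15/14}R^{3/14}$ comes from. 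A one-shot 3D stationary phase applied to $W(\eta)$ has no analogue of the second differencing parameter $T$, and there is no evidence in your sketch that it reproduces these exponents; if you want to pursue your route you must carry the degenerate-phase analysis and dual-frequency counting to completion and exhibit the resulting exponent, rather than assert that the known answer will emerge.
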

\begin{rem}
The right hand side
\[
\ll N^{\nu/2 + \ep} \cd 
\begin{cases}
N^{3/2},\, 1 \le N \ll R^{1/2}\\
N^{15/14} R^{3/14},\, R^{1/2} \ll N \ll R^{6/5}\\
N^{5/4},\, N \gg R^{6/5}
\end{cases}
\]
\end{rem}
\begin{proof}
Clearly it suffices to prove Lemma \ref{V_NQR} for $Q = a^i b^j c^k$ with $i+j+k = \nu$. The fact that $V_{N,Q,\bh}(R) \ll N^{3/2}$ comes from the trivial bound
\[
V_{N,Q,\bh}(R) \le \sum_{\xi \in \Z^3 \atop |\xi|^2 \le N} |Q(\xi)| \le \#\{\xi \in \Z^3, |\xi|^2 \le N\} \sup_{|\xi|^2 \le N} |Q(\xi)| \ll N^{\nu/2 + 3/2}
\]

To prove the second bound, we transform it into an expression that is the starting place of the estimates in \cite{CI}. By dyadic decomposition
\[
V_{N,Q,\bh}(R) = \sum_{k=1}^{\log N} V^\circ_{2^k,Q,\bh}(R)
\]
where
\[
V^\circ_{N,Q,\bh}(R) = \sum_{\xi \in \Z^3 \atop |\xi|^2 \asymp N} Q(\xi) e(R|\xi| + \bh \cd \xi)
\]
so it suffices to show
\[
V^\circ_{N,Q,\bh}(R) \ll_{\ep,Q} N^{\nu/2 + \ep} (N^{5/4} + N^{15/14} R^{3/14})
\]

For $a^2 + b^2 \le N$, let $\chi_{a,b}$ be a piecewise linear function that is equal to 1 on $\Z \cap [-\sqrt{N - a^2 - b^2}, \sqrt{N - a^2 - b^2}]$, and 0 on the other integers. Then by Theorem 7.3 in \cite{GK}, $\hat \chi_{a,b}(\th)$ is uniformly bounded by $K(\th)$, where $\|K\|_{L^1} \ll \log N \ll_\ep N^\ep$. Therefore, up to the terms where two of $a$, $b$ and $c$ are identical, which sum to $(\sqrt N)^2 O(N^{\nu/2}) = O(N^{\nu/2 + 1})$,
\begin{align*}
V^\circ_{N,Q,\bh}(R) &\ll \sum_{a^2 + b^2 + c^2 \asymp N \atop a,b \ge c} e(R\sqrt{a^2 + b^2 + c^2} + ah_1 + bh_2 + ch_3) a^i b^j c^k\\
&= \sum_{a,b \ge c} a^i b^j e(ah_1 + bh_2) \sum_{|c| \le \sqrt N} e(R\sqrt{a^2 + b^2 + c^2} + ch_3) c^k \int_\R \hat \chi_{a,b}(\th)e(c\th)d\th\\
&\le \sum_{a,b \ge c} (a^2 + b^2)^{(i+j)/2} \left| \int_\R \hat \chi_{a,b}(\th) \sum_{|c| \le \sqrt N} e(R\sqrt{a^2 + b^2 + c^2}) c^k e(c(\th + h_3))d\th \right|\\
&\ll \sum_{0 < n = a^2 + b^2 \asymp N} n^{(i+j)/2} d(n) \int_\R |K(\th)| \left| \sum_{|c| \le \sqrt N} e(R\sqrt{n + c^2}) c^k e(c(\th + h_3)) \right| d\th\\
&\ll_\ep N^{(i+j)/2+\ep} \max_\th \sum_{0 < n \asymp N} \left| \sum_{|c| \le \sqrt N} e(R\sqrt{n + c^2}) c^k e(c(\th + h_3)) \right|
\end{align*}

Now let's apply Weyl differencing: for $1 \le Y \ll \sqrt N$ we have
\begin{align*}
&Y \sum_{|c| \le \sqrt N} e(R\sqrt{n + c^2}) c^k e(c(\th + h_3))\\
\asymp &\sum_{|d| \le Y \atop 2|d} \sum_{c \atop |c+d| \le \sqrt N} e(R\sqrt{n + (c+d)^2}) (c+d)^k e((c+d)(\th + h_3))\\
= &\sum_{|c| \le \sqrt N + Y} \sum_{|d| \le Y \atop {|c+d| \le \sqrt N \atop 2|d}} e(R\sqrt{n + (c+d)^2}) (c+d)^k e((c+d)(\th + h_3))
\end{align*}

By Cauchy Inequality,
\begin{align*}
&Y^2 \left| \sum_{|c| \le \sqrt N} e(R\sqrt{n + c^2}) c^k e(c(\th + h_3)) \right|^2\\
\ll &N^{1/2} \sum_{|c| \le \sqrt N + Y} \left| \sum_{|d| \le Y \atop {|c+d| \le \sqrt N \atop 2|d}} e(R\sqrt{n + (c+d)^2}) (c+d)^k e((c+d)(\th + h_3)) \right|^2\\
= &N^{1/2} \sum_{|c| \le \sqrt N + Y} \sum_{|d_{1,2}| \le Y \atop {|c+d_{1,2}| \le \sqrt N \atop 2|d_{1,2}}} e(R(\sqrt{n + (c+d_1)^2} - \sqrt{n + (c+d_2)^2}))\\
&\cd (c+d_1)^k (c+d_2)^k e((d_1-d_2)(\th + h_3))\\
= &N^{1/2} \sum_{d = (d_1-d_2)/2 \atop |d| \le  Y} \sum_{m = c+(d_1+d_2)/2 \atop |m| \le \sqrt N - d} (Y - d) e(R(\sqrt{n + (m + d)^2} - \sqrt{n + (m - d)^2}))\\
&(m+d)^k (m-d)^k e(2d(\th + h_3))\\
\le &N^{k+1}Y + N^{1/2} \sum_{1 \le |d| \le Y} \sum_{|m| \le \sqrt N - d} (Y - d) e(R(\sqrt{n + (m + d)^2} - \sqrt{n + (m - d)^2}))\\
&(m+d)^k (m-d)^k e(2d(\th + h_3))
\end{align*}
Therefore
\begin{align*}
&\left| \sum_{0 < n \asymp N} \sum_{|c| \le \sqrt N} e(R\sqrt{n + c^2}) c^k e(c(\th + h_3)) \right|^2\\
\ll &N \max_\th \sum_{0 < n \asymp N} \left| \sum_{|c| \le \sqrt N} e(R\sqrt{n + c^2}) c^k e(c(\th + h_3)) \right|^2\\
\ll &N^k \left( N^3Y\inv + N^{3/2}Y\inv \sum_{|d| \le Y} \sum_{|m| \le \sqrt N} \left| \sum_{0 < n \asymp N} e(R(\sqrt{n + (m + d)^2} - \sqrt{n + (m - d)^2})) \right| \right)\\
\ll &N^k \left( N^3Y\inv + N^{3/2}Y\inv \sum_{y = 4md \atop y \ll Y\sqrt N} d(y) \left| \sum_{0 < x = n+(m-d)^2 \asymp N} e(f(x, y)) \right| \right)\\
\ll_\ep &N^{k + \ep} (N^3Y\inv + N^{3/2}Y\inv V_{N, Y\sqrt N}(R))
\end{align*}
where $f(x, y) = R(\sqrt{x + y} - \sqrt x)$ and
\[
V_{N,D}(R) = \sum_{y \asymp D} \left| \sum_{0 < x \asymp N} e(f(x, y)) \right|
\]

Hence we conclude that, for all $1 \le Y \ll \sqrt N$,
\begin{align*}
|V^\circ_{N,Q,\bh}(R)|^2 &\ll_{\ep,Q} N^{\nu+\ep} (N^3Y\inv + N^{3/2}Y\inv V_{N, Y\sqrt N}(R))\\
&\ll_\ep N^{\nu+2\ep} (N^3Y\inv + R^{1/2} N^2 Y^{1/6})
\end{align*}
The last step follows from Lemma \ref{sumxy}. As a result
\[
V^\circ_{N,Q,\bh}(R) \ll_{\ep,Q} N^{\nu/2+\ep} (N^{3/2}Y^{-1/2} + R^{1/4} N Y^{1/12})
\]
If $N \gg R^{6/5}$, then we let $Y \asymp \sqrt N$ to get
\[
V^\circ_{N,Q,\bh}(R) \ll_{\ep,Q} N^{\nu/2+\ep} (N^{5/4} + R^{1/4} N^{25/24}) \ll N^{\nu/2+\ep} N^{5/4}
\]
If $R^{1/2} \le N \ll R^{6/5}$, then we let $1 \le Y \asymp N^{6/7} R^{-3/7} \ll \sqrt N$ to get
\[
V^\circ_{N,Q,\bh}(R) \ll_{\ep,Q} N^{\nu/2+\ep} N^{15/14} R^{3/14}
\]
If $N \le R^{1/2}$, then $N^{15/14} R^{3/14} \ge N^{3/2}$ and Lemma \ref{V_NQR} reduces to the trivial bound.
\end{proof}

\begin{lem}\label{sumxy}
If $R \ge 1$ and $1 \le D \ll N$, then
\[
V_{N,D}(R) \ll (\log N) (N^{3/2} + R^{1/2} D^{7/6} N^{-1/12})
\]
\end{lem}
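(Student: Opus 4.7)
The bound decomposes into two pieces, and I would handle them separately. The first term, $N^{3/2}$, comes from the trivial estimate $|s_y| := |\sum_{0 < x \asymp N} e(f(x,y))| \le N$, which yields $V_{N,D}(R) \le DN$, and this is $\le N^{3/2}$ whenever $D \le N^{1/2}$. For the non-trivial regime $D > N^{1/2}$ I would derive the second term $R^{1/2} D^{7/6} N^{-1/12}$ using van der Corput's method, combining Weyl differencing (the $A$-process) and Poisson summation with stationary phase (the $B$-process).

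The main steps are as follows. Introduce unimodular weights $\epsilon_y = \overline{s_y}/|s_y|$ to write $V_{N,D}(R) = \sum_{y \asymp D} \epsilon_y s_y$, then apply Cauchy-Schwarz on $y$ to obtain $V_{N,D}(R)^2 \le D \sum_{y \asymp D} |s_y|^2$. Expand the square and swap the order of summation:
\[
\sum_y |s_y|^2 = ND + \sum_{t \ne 0} \sum_{x \asymp N} \sum_{y \asymp D} e(\psi_t(x,y)),
\]
where $\psi_t(x,y) = f(x,y) - f(x+t,y)$. The diagonal $t = 0$ contributes $ND$, which accounts for the $DN^{1/2} \le N^{3/2}$ piece of the final bound. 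For the off-diagonal, one computes $|\partial_y \psi_t| \asymp Rt/N^{3/2}$ (nearly constant in $y$) and $|\partial_y^2 \psi_t| \asymp Rt/N^{5/2}$. I would then apply Weyl differencing on the inner $x$-sum with a shift parameter $T \le N^{1/2}$, follow with the $B$-process applied to the resulting transformed sum, sum over $y$ and $t$, and finally optimize $T$ to balance the diagonal and off-diagonal contributions before taking the square root. An alternative worth exploring is to apply the $B$-process first to $s_y$ itself, transforming it into a dual sum of length $\asymp Ry/N^{3/2}$, and then apply Cauchy-Schwarz on $y$ together with a further van der Corput estimate on the dual sum.

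The main obstacle is that the target exponent $D^{7/6}$ is characteristic of the exponent pair $(1/6, 2/3) = A(B(0,1))$, so matching both the $R^{1/2}$ factor and the $N^{-1/12}$ factor requires a carefully-calibrated two-step process. In particular, naive exponent-pair bounds applied directly to the inner $y$-sum are suboptimal because $\psi_t$ is nearly linear in $y$ (its second $y$-derivative is smaller than what the size of $\partial_y \psi_t$ would generically predict), so Kusmin-Landau or the second-derivative test must be substituted at this step. If one instead applies the $B$-process first to the $x$-sum, the resulting two-dimensional phase $g(m,y)$ has a Hessian that is degenerate at leading order (one finds $\partial_m^2 g \cdot \partial_y^2 g \asymp (\partial_m \partial_y g)^2$), so sub-leading terms are needed for the two-dimensional stationary-phase computation. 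Bookkeeping all these error terms and choosing the right order in which to apply the $A$- and $B$-steps is where the bulk of the technical work lies.
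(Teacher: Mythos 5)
Your primary route is not the paper's, and as sketched it does not reach the stated bound; the argument that does work is the one you set aside as ``an alternative worth exploring.'' The paper splits at $D_0=N^{3/2}R^{-1}$, not at $N^{1/2}$: for $D\ll D_0$ one has $f_x\asymp RDN^{-3/2}\ll 1$ and the Kusmin--Landau first-derivative test gives each inner sum $\ll N^{3/2}R^{-1}D^{-1}$, hence the $N^{3/2}$ term (note your trivial bound $DN$ only covers $D\le N^{1/2}$; when $N\gg R$ the range $N^{1/2}<D\ll D_0$ is nonempty and $DN$ can exceed $N^{3/2}$ there). For $D\gg D_0$ the paper applies the one-dimensional $B$-process to the $x$-sum for each fixed $y$ (dual length $U\asymp RDN^{-3/2}$), strips the amplitude $|f_{xx}|^{-1/2}$ by Abel summation, applies Cauchy--Schwarz in $y$, and then invokes Lemma \ref{V_NUyR}: the dual $\xi$-sum is Weyl-differenced with an optimized shift $T$, and the decisive cancellation is extracted from the $y$-sums of $e(G(\xi,y,t))$, $G(\xi,y,t)=g(\xi+t,y)-g(\xi,y)$, via the second/third-derivative tests using $\de_y^nG\asymp tND^{-n}$. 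No two-dimensional stationary phase is ever performed, so your concern about a degenerate Hessian does not arise.

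The gap in your main plan is concrete. First, it does not parse as written: after Cauchy--Schwarz in $y$ you open the square completely, so $t$ runs over all $0<|t|\ll N$ with explicit phases $\psi_t$, and there is no longer an inner $x$-sum on which to ``apply Weyl differencing with a shift parameter $T$.'' Second, the only tools you name for the $y$-sums of $e(\psi_t)$ (Kusmin--Landau and the second-derivative test, with $\de_y\psi_t\asymp R|t|N^{-3/2}$, $\de_y^2\psi_t\asymp R|t|N^{-5/2}$) are quantitatively insufficient: carrying this out gives $V_{N,D}(R)\ll N^{1/2}D+R^{-1/2}D^{1/2}N^{5/4}(\log N)^{1/2}+R^{1/4}DN^{5/8}+R^{-1/4}D^{1/2}N^{11/8}$, and already the term $R^{1/4}DN^{5/8}$ exceeds $N^{3/2}+R^{1/2}D^{7/6}N^{-1/12}$ at the borderline configuration $D\asymp N\asymp R^{6/5}$ (it is $\asymp R^{11/5}$ against a target $\asymp R^{9/5}$), which is exactly where Lemma \ref{sumxy} is needed in Lemma \ref{V_NQR}. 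If instead one reads your plan as truncated $A$ in $x$ followed by $B$/derivative tests on the differenced $x$-sums with $y$ summed trivially, optimizing $T$ yields at best $\ll R^{1/6}D^{7/6}N^{5/12}$ --- no better than applying an exponent pair to each $s_y$ separately --- which falls short of $R^{1/2}D^{7/6}N^{-1/12}$ as soon as $N\gg R^{2/3}$. The missing idea is that the saving must come from $y$-cancellation of the \emph{differenced dual} phase $G$ (i.e.\ $B$ before $A$), whose $y$-derivatives $\asymp tND^{-n}$ are of the right size for the derivative tests; differencing in $x$ before dualizing leaves a $y$-phase whose first derivative is too large for Kusmin--Landau beyond $|t|\gg N^{3/2}R^{-1}$ and too flat in higher derivatives for the second-derivative test to recover the loss.
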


\begin{proof}
By dyadic decomposition
\[
V_{N,D}(R) = \sum_{k=1}^{\log D} V^\circ_{N,2^k}(R) \ll (\log N) \max_{1 \ll D_1 \ll D} V^\circ_{N,D_1}(R)
\]
where
\[
V^\circ_{N,D}(R) = \sum_{y \asymp D} \left| \sum_{0 < x \asymp N} e(f(x, y)) \right|
\]

If $D \ll D_0 = N^{3/2} R\inv$, then
\[
f_x(x, y) = \frac R 2 \left( \rc{\sqrt x} - \rc{\sqrt{x + y}} \right) = \frac{Ry}{2\sqrt x \sqrt{x + y} (\sqrt x + \sqrt{x + y})} \ll 1
\]
Therefore by Theorem 2.1 of \cite{GK}
\[
\left| \sum_{0 < x \asymp N} e(f(x, y)) \right| \ll f_x(x, y)\inv \ll N^{3/2} R\inv D\inv
\]
so $V_{N,D}(R) \ll N^{3/2} R\inv \le N^{3/2}$.

Now suppose $D_0 \ll D \ll N$. This is possible only when $N \ll R^2$. By applying the $B$ process of the one-dimensional Van der Corput's method (Poisson summation and stationary phase, Lemma 3.6 of \cite{GK} with $F = RDN^{-1/2}$) we get
\begin{align*}
\left| \sum_{0 < x \asymp N} e(f(x, y)) \right| &= \sum_{\xi \asymp f_x(x, y)} \frac{e(g(\xi, y) - 1/8)}{|f_{xx}(\a(\xi, y), y)|^{1/2}} + O(\log (RDN^{-3/2}))\\
&+ O(R^{-1/2} D^{-1/2} N^{5/4}) + O(1)\\
&= \sum_{\xi \asymp f_x(x, y)} \frac{e(g(\xi, y) - 1/8)}{|f_{xx}(\a(\xi, y), y)|^{1/2}} + O(\log R) + O(R^{-1/2} D^{-1/2} N^{5/4})
\end{align*}
where $\a(\xi, y)$ satisfies $f_x(\a(\xi, y), y) = \xi$, and
\[
g(\xi, y) = f(\a(\xi, y), y) - \a(\xi, y)\xi
\]

For any $n \in \N$, $\de_x^n f(x, y)$ does not change sign, so (replacing $n$ by $n+1$) it is monotone in $x$. Thus $\a(\xi, y)$ and hence $f_{xx}(\a(\xi, y), y)$ are monotone in $\xi$. By Abel summation, we obtain
\begin{align*}
\left| \sum_{0 < x \asymp N} e(f(x, y)) \right| &\ll \frac{V_{N,U}(y;R)}{\inf_{\xi \asymp U} |f_{xx}(\a(\xi, y), y)|^{1/2}} + R^{-1/2} D^{-1/2} N^{5/4} + \log R\\
&\ll R^{-1/2} D^{-1/2} N^{5/4} V_{N,U}(y;R) + R^{-1/2} D^{-1/2} N^{5/4} + \log R
\end{align*}
for $U \asymp f_x(x, y) = RDN^{-3/2}$, where
\[
V_{N,U}(y;R) = \left| \sum_{\xi \asymp U} e(g(\xi, y)) \right|
\]
Hence
\begin{align*}
V^\circ_{N,D}(R) &= \sum_{y \asymp D} \left| \sum_{0 < x \asymp N} e(f(x, y)) \right|\\
&\ll R^{-1/2} D^{-1/2} N^{5/4} \sum_{y \asymp D} V_{N,U}(y;R) + R^{-1/2} D^{1/2} N^{5/4} + D\log R\\
&\ll R^{-1/2} N^{5/4} \left( \sum_{y \asymp D} |V_{N,U}(y;R)|^2 \right)^{1/2} + R^{-1/2} N^{7/4} + N\log R
\end{align*}

By Lemma \ref{V_NUyR} below we conclude that
\begin{align*}
V^\circ_{N,D}(R) &\ll R^{-1/2} N^{5/4} (R^{1/2} DN^{-3/4} + RD^{7/6} N^{-4/3}) + R^{-1/2} N^{7/4} + N\log R\\
&\ll N^{3/2} + R^{1/2} D^{7/6} N^{-1/12}
\end{align*}
where we have used $1 \le D \ll N \ll R^2$ to dominate the first two terms over the last two.
\end{proof}

\begin{lem}\label{V_NUyR}
If $R \ll N^2$ and $1 \le D \ll N$, then
\[
\sum_{y \asymp D} |V_{N,U}(y;R)|^2 \ll RD^2 N^{-3/2} + R^2 D^{7/3} N^{-8/3}
\]
\end{lem}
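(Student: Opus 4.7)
The plan is to perform a second Weyl differencing (an A-process) on the $\xi$-sum defining $V_{N,U}(y;R)$, and then estimate the resulting $y$-sum of the difference phase by the second derivative test (a B-process in $y$). A free parameter $T\in[1,U]$ will control the length of the differencing, and optimizing $T$ at the end will balance the diagonal and off-diagonal contributions, producing the two terms on the right of the lemma. I may assume $U\asymp RD/N^{3/2}\ge 1$, since otherwise the sum is empty and there is nothing to prove.

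\emph{Step 1.} By the Weyl--van der Corput inequality, for any $1\le T\le U$,
\[
|V_{N,U}(y;R)|^2\ll \frac{U^2}{T}+\frac{U}{T}\sum_{1\le t\le T}\left|\sum_{\xi\asymp U}e(\psi_{t,\xi}(y))\right|,\qquad \psi_{t,\xi}(y):=g(\xi+t,y)-g(\xi,y).
\]
Summing over $y\asymp D$ and swapping orders gives
\[
\sum_{y\asymp D}|V_{N,U}(y;R)|^2\ll \frac{DU^2}{T}+\frac{U}{T}\sum_{t\le T}\sum_{\xi\asymp U}\left|\sum_{y\asymp D}e(\psi_{t,\xi}(y))\right|.
\]

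\emph{Step 2.} I compute $\partial_y^2\psi_{t,\xi}$. Recalling $g(\xi,y)=f(\alpha,y)-\alpha\xi$ where $\alpha=\alpha(\xi,y)$ is defined by $f_x(\alpha,y)=\xi$, the usual stationary-phase identities give $g_y=f_y|_{(\alpha,y)}$ and $g_{yy}=(f_{yy}-f_{xy}^2/f_{xx})|_{(\alpha,y)}$. For $f(x,y)=R(\sqrt{x+y}-\sqrt x)$ one has $f_{xy}\asymp f_{yy}\asymp R/N^{3/2}$ at $x=\alpha\asymp N$, but $f_{xx}\asymp RD/N^{5/2}$ (after a Taylor expansion since $y\ll x$). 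Hence $f_{xy}^2/f_{xx}$ dominates $f_{yy}$ and $|g_{yy}|\asymp R/(DN^{1/2})$. A further $\xi$-derivative, using $\alpha_\xi=1/f_{xx}\asymp N^{5/2}/(RD)$, yields $|g_{yy\xi}|\asymp N/D^2$ at $\xi\asymp U$, whence $|\partial_y^2\psi_{t,\xi}|\asymp tN/D^2$ for $1\le t\le T$.

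\emph{Step 3.} By the second derivative test (Theorem~2.2 of \cite{GK}),
\[
\left|\sum_{y\asymp D}e(\psi_{t,\xi}(y))\right|\ll D\left(\tfrac{tN}{D^2}\right)^{1/2}+\left(\tfrac{tN}{D^2}\right)^{-1/2}\asymp (tN)^{1/2}+D(tN)^{-1/2}.
\]
Summing over $\xi$ and $t$ (using $\sum_{t\le T}t^{1/2}\asymp T^{3/2}$, $\sum_{t\le T}t^{-1/2}\asymp T^{1/2}$) and substituting $U\asymp RD/N^{3/2}$,
\[
\sum_{y\asymp D}|V_{N,U}(y;R)|^2\ll \frac{R^2 D^3}{N^3 T}+\frac{R^2 D^2 T^{1/2}}{N^{5/2}}+\frac{R^2 D^3}{N^{7/2}T^{1/2}}.
\]
The choice $T=D^{2/3}N^{-1/3}$ balances the first two terms at $R^2 D^{7/3}N^{-8/3}$ (the third being always dominated, since $D\ll N$). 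If this $T$ exceeds $U$, i.e.\ if $R<N^{7/6}D^{-1/3}$, I take $T=U$ instead; the first term then becomes the diagonal-type contribution $DU=RD^2N^{-3/2}$, which absorbs the others in this range. If $D<N^{1/2}$ makes the optimal $T$ fall below $1$, the trivial bound $|V|\le U$ already gives $DU^2\le R^2 D^{7/3}N^{-8/3}$. Combining these cases yields the claim.

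The main technical obstacle is Step~2: both $f_{yy}$ and $f_{xy}$ are of the ``pure'' scale $R/N^{3/2}$, and only division by the smaller $f_{xx}\asymp RD/N^{5/2}$ pushes $g_{yy}$ up to its true size $R/(DN^{1/2})$. Tracking this near-cancellation accurately (and the analogous $|g_{yy\xi}|\asymp N/D^2$) is what produces the $7/3$ exponent on $D$ in the final bound; the remaining steps are mechanical applications of classical one-dimensional exponential sum tools.
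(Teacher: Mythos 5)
Your proof is correct and follows the same skeleton as the paper's: an $A$-process in the $\xi$-variable of length $T\in[1,U]$, a derivative test applied to the resulting $y$-sums with phase $\psi_{t,\xi}(y)=g(\xi+t,y)-g(\xi,y)$ satisfying $\partial_y^2\psi_{t,\xi}\asymp tN/D^2$, and an optimization of $T$ yielding the diagonal term $DU\asymp RD^2N^{-3/2}$ and the off-diagonal term $R^2D^{7/3}N^{-8/3}$. Two remarks. First, a point of rigor in Step 1: from the form of the Weyl--van der Corput inequality you quote, with the modulus already around the differenced $\xi$-sum, your next display (modulus around the $y$-sum) does not follow by ``swapping orders''; you must use the inequality in its unsigned form (Lemma 2.5 of \cite{GK}), namely $|V_{N,U}(y;R)|^2\ll \frac{U}{T}\sum_{|t|<T}\bigl(1-\tfrac{|t|}{T}\bigr)\sum_{\xi\asymp U}e\bigl(\psi_{t,\xi}(y)\bigr)$, sum over $y$ first, and only then apply the triangle inequality in $(t,\xi)$; this is exactly how the paper arranges the differencing, and with that fix your displayed bound $\frac{DU^2}{T}+\frac{U}{T}\sum_{t\le T}\sum_{\xi\asymp U}\bigl|\sum_{y\asymp D}e(\psi_{t,\xi}(y))\bigr|$ is the paper's starting inequality. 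Second, your endgame is genuinely leaner: you use only the second-derivative test $\ll (tN)^{1/2}+D(tN)^{-1/2}$, whereas the paper carries the minimum with the third-derivative bound $T^{1/6}N^{1/6}D^{1/2}$ and runs a longer case analysis over the thresholds $D_0,\dots,D_3$ in the three ranges of $N$; your three cases ($1\le T_1=D^{2/3}N^{-1/3}\le U$; $T_1>U$, where $U^{5/2}N^{1/2}\le DU$ precisely because $U<T_1$; and $T_1<1$, i.e.\ $D<N^{1/2}$, where the trivial bound $DU^2\le R^2D^{7/3}N^{-8/3}$ suffices) cover all parameters with $U\gg1$ (and the lemma is only invoked for $D\gg N^{3/2}R^{-1}$, so your standing assumption $U\ge1$ is harmless), so the third-derivative input turns out to be unnecessary for the stated bound. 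Your Step 2, computing $g_{yy}=f_{yy}-f_{xy}^2/f_{xx}\asymp R/(DN^{1/2})$ and $g_{yy\xi}\asymp N/D^2$ via the Legendre-transform identities, is a self-contained derivation of the estimates the paper imports from \cite{CI} and Corollary 3.5 of \cite{CC}, and it matches them.
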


\begin{rem}
A more general result utilizing any established exponent pair $(k, l)$ is found in Lemma \ref{NewV_NUyR} in Appendix B. However, the slight improvement obtained by using the exponent pair in \cite{Hu} comes with the price of the inflation of the numerator and denumerator.
\end{rem}

\begin{proof}
Apply Weyl differencing to $V_{N,U}(y;R)$: for some $T \in [1, U]$ to be chosen later,
\begin{align*}
T^2 \sum_{y \asymp D} |V_{N,U}(y;R)|^2 &= \sum_{y \asymp D} \left| \sum_{\xi \asymp U} \sum_{t=0}^{T-1} e(g(\xi + t, y)) \right|^2\\
&\le U \sum_{\xi \asymp U} \sum_{t_1,t_2=0}^{T-1} \sum_{y \asymp D} e(g(\xi + t_1, y) - g(\xi + t_2, y))\\
&= U \sum_{|t| \le T} (T - |t|) \sum_{\xi \asymp U} \sum_{y \asymp D} e(G(\xi, y, t))\\
&\le U^2TD + UT \sum_{1 \le |t| \le T} \sum_{\xi \asymp U} \left| \sum_{y \asymp D} e(G(\xi, y, t)) \right|\\
&\ll U^2TD + U^2T^2 \max_{1 \le |t| \le T} |V_{N,D}(t,\xi;R)|
\end{align*}
where $G(\xi, y, t) = g(\xi + t, y) - g(\xi, y)$, and
\[
V_{N,D}(t,\xi;R) = \sum_{y \asymp D} e(G(\xi, y, t))
\]

From the computations in \cite{CI} (or Corollary 3.5 of \cite{CC}) we know that $\de_y^n G(\xi, y, t) \asymp tND^{-n}$ for $n = 1$, 2, 3, so estimates (2.3.8) and (2.3.9) in \cite{GK} are valid (with $F$ replaced by $tN$ and $N$ replace by $D$). Note that in this case $tN \ge D$, so the first term in both estimates dominates the second term, yielding
\[
|V_{N,D}(t,\xi;R)| \ll \min\{T^{1/2} N^{1/2}, T^{1/6} N^{1/6} D^{1/2}\}
\]
Therefore
\[
\sum_{y \asymp D} |V_{N,U}(y;R)|^2 \ll U^2(T\inv D + \min\{T^{1/2} N^{1/2}, T^{1/6} N^{1/6} D^{1/2}\})
\]

Let's record the numerology of some borderlines. Balancing the first and the second term yields $T_1 = D^{2/3} N^{-1/3}$. Balancing the first and the third yields $T_2 = D^{3/7} N^{-1/7}$. Balancing the second and the third yields $T_3 = D^{3/2} N\inv$. Balancing $T_1$ with $U = RDN^{-3/2} \ge 1$ yields $D_1 = N^{7/2} R^{-3}$. Balancing $T_2$ with $U$ gives $D_2 = N^{19/8} R^{-7/4}$. Balancing $T_3$ with $U$ gives $D_3 = R^2N\inv$. Comparing $D_0 = N^{3/2} R\inv$, $D_1$, $D_2$, $D_3$ and $N$ gives the following list:
\[
\begin{matrix}
N & {\rm ordering}\\
[R^{6/5}, R^2] & D_3 \ll D_0 \ll N, D_2 \ll D_1\\
[R, R^{6/5}] & D_0 \ll D_{1,2,3} \ll N\\
[1, R] & D_1 \ll D_{0,2} \ll N \ll D_3
\end{matrix}
\]

(1) $R^{6/5} \ll N \ll R^2$. We always have $D_3 \ll D_0$ and $N \ll D_1$. Thus $T_{1,3} \gg U$, so we always take the first argument of the min, which is then dominated by $T\inv D$. Therefore by setting $T = U$ we get
\[
\sum_{y \asymp D} |V_{N,U}(y;R)|^2 \ll UD = RD^2 N^{-3/2}
\]

(2) $R \ll N \ll R^{6/5}$.

(2.1) $D_1 \ll D \ll N$. Then we have $T_1 \ll U$, so we balance the first argument of the min with $T\inv D$ by setting $T = T_1 \gg D_1^{2/3} N^{-1/3} = N^2 R^{-2} \gg 1$ to get
\[
\sum_{y \asymp D} |V_{N,U}(y;R)|^2 \ll U^2 D^{1/3} N^{1/3} = R^2 D^{7/3} N^{-8/3}
\] 
(Although the second argument in the min may be smaller, it does not happen for large $D$, so we have to balance the first argument with $T\inv D$ in this case.)

(2.2) $D_0 \ll D \ll D_1$. Then we have $U \ll T_1$, so we take $T = U$ to reach the same conclusion as in (1).

(3) $1 \le N \ll R$.

(3.1) $N^{1/2} \ll D \ll N$. In this case $T_1 \gg 1$, so we can set $T = T_1$ to reach the same conclusion as (2.1).

(3.2) $1 \le D \ll N^{1/2}$. In this case we let $T = 1$:
\[
\sum_{y \asymp D} |V_{N,U}(y;R)|^2 \ll U^2 (D + N^{1/6} D^{1/2}) = R^2 D^3 N^{-3} + R^2 D^{5/2} N^{-17/6}
\]
which is again dominated by (2.1).

Combining (1) through (3) we conclude that for all $D \ll N$,
\[
\sum_{y \asymp D} |V_{N,U}(y;R)|^2 \ll RD^2 N^{-3/2} + R^2 D^{7/3} N^{-8/3}
\]
\end{proof}

\section{Estimating the Long Sum}
\begin{lem}\label{sumxi}
Suppose $Q$ is a homogeneous polynomial of degree $\nu \ge 0$ in three variables, not necessarily having zero mean on the sphere, and $m \ge \nu + 3$, then
\[
\lim_{N \goto \oo} \sum_{0 < |\xi|^2 \le N} \frac{Q(\xi) e(R|\xi| + \bh \cd \xi)}{|\xi|^m} \ll_{\ep,Q} R^\ep
\]
Moreover, the limit on the left hand side converges uniformly (although not absolutely) in $\bh$.
\end{lem}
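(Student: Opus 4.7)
The natural approach is to reduce this weighted sum to the unweighted exponential sum $V_{N,Q,\bh}(R)$ via Abel summation, then invoke Lemma \ref{V_NQR}. Since $|\xi|^2 \in \N$ for $\xi \in \Z^3$, I first group the sum by the value of $n = |\xi|^2$ to write
\[
S(N) := \sum_{0 < |\xi|^2 \le N} \frac{Q(\xi) e(R|\xi| + \bh \cd \xi)}{|\xi|^m} = \sum_{n=1}^{N} \frac{U_n}{n^{m/2}}, \qquad U_n := \sum_{|\xi|^2 = n} Q(\xi) e(R|\xi| + \bh \cd \xi),
\]
so that $V_{N,Q,\bh}(R) = \sum_{n=1}^{N} U_n$. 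A single Abel summation then yields
\[
S(N) = \frac{V_{N,Q,\bh}(R)}{N^{m/2}} + \sum_{n=1}^{N-1} V_{n,Q,\bh}(R) \left(\frac{1}{n^{m/2}} - \frac{1}{(n+1)^{m/2}}\right),
\]
where the difference is $\asymp n^{-m/2 - 1}$.

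\textbf{Bounding the boundary term and the tail.} For the boundary term, using the bound $V_{N,Q,\bh}(R) \ll_{\ep,Q} N^{\nu/2 + 5/4 + \ep}$ valid for $N \gg R^{6/5}$ from Lemma \ref{V_NQR}, we get $V_N/N^{m/2} \ll N^{5/4 - 3/2 + \ep} = N^{-1/4 + \ep} \to 0$ as $N \to \infty$. This simultaneously shows that the partial sums $S(N)$ form a Cauchy sequence (applying the same Abel argument to $S(M) - S(N)$), and since the bounds in Lemma \ref{V_NQR} are uniform in $\bh$, convergence is uniform in $\bh$.

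\textbf{Bounding the main Abel sum.} I split the range $1 \le n \le N$ into the three regimes of Lemma \ref{V_NQR}'s remark. In the range $n \ll R^{1/2}$, the trivial bound $V_n \ll_{\ep,Q} n^{\nu/2 + 3/2 + \ep}$ contributes
\[
\sum_{n \ll R^{1/2}} \frac{n^{\nu/2 + 3/2 + \ep}}{n^{m/2 + 1}} \ll \sum_{n \ll R^{1/2}} n^{\ep - 1} \ll \log R \ll R^\ep,
\]
using $m \ge \nu + 3$. In the range $R^{1/2} \ll n \ll R^{6/5}$, the bound $V_n \ll_{\ep,Q} n^{\nu/2 + 15/14 + \ep} R^{3/14}$ gives a tail sum $R^{3/14} \sum_{n \gg R^{1/2}} n^{-10/7 + \ep} \ll R^{3/14} \cdot R^{-3/14 + \ep} = R^\ep$. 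In the range $n \gg R^{6/5}$, the bound $V_n \ll_{\ep,Q} n^{\nu/2 + 5/4 + \ep}$ gives $\sum n^{-5/4 + \ep} \ll R^{-3/10 + \ep} \ll R^\ep$. Summing the three contributions yields $S(N) \ll_{\ep,Q} R^\ep$ uniformly in $N$ and $\bh$, so the limit exists, converges uniformly in $\bh$, and satisfies the asserted bound.

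\textbf{Expected difficulty.} This is essentially a bookkeeping exercise; the only mild subtlety is verifying that the marginal case $m = \nu + 3$ still gives convergence at $n = 1$ (it does, with a $\log$ loss absorbed into $R^\ep$), and confirming that the absence of absolute convergence does not obstruct uniform convergence in $\bh$ — this is handled by the fact that Lemma \ref{V_NQR}'s bound on $V_{n,Q,\bh}(R)$ is uniform in $\bh$, making the Cauchy estimate likewise uniform.
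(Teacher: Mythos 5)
Your proposal is correct and follows essentially the same route as the paper's proof: Abel summation in $n=|\xi|^2$, the three-range application of Lemma \ref{V_NQR} with the same exponent bookkeeping, and a uniform-in-$\bh$ Cauchy tail estimate for the uniform convergence. The only cosmetic difference is that the paper explicitly notes the $O_Q(1)$ discrepancy from the $\xi=0$ term in $V_{N,Q,\bh}(R)$, which your identification $V_{N,Q,\bh}(R)=\sum_{n\le N}U_n$ silently absorbs; this is harmless.
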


\begin{proof}
By Abel summation
\begin{align*}
&\sum_{0 < |\xi|^2 \le N} \frac{Q(\xi) e(R|\xi| + \bh \cd \xi)}{|\xi|^m} = \sum_{n=1}^N n^{-m/2} \sum_{|\xi|^2 = n} Q(\xi) e(R|\xi| + \bh \cd \xi)\\
\ll &\sum_{n=1}^N n^{-m/2-1} |V_{n,Q,\bh}(R)| + N^{-m/2} |V_{N,Q,\bh}(R)|
\end{align*}
where
\[
V_{N,Q,\bh}(R) = \sum_{|\xi|^2 \le N} Q(\xi) e(R|\xi| + \bh \cd \xi)
\]
and we have ignored the discrepancy at $\bxi = 0$, which is of order $O_Q(1)$ and is clearly dominated by other terms. By Lemma \ref{V_NQR} we have
\begin{align*}
&\limsup_{N \goto \oo} \left| \sum_{0 < |\xi|^2 \le N} \frac{Q(\xi) e(R|\xi| + \bh \cd \xi)}{|\xi|^m} \right|\\
\ll_{\ep,Q} &\sum_{1 \le n \ll R^{1/2}} n^{-m/2 + \nu/2 + 1/2 + \ep} + R^{3/14} \sum_{R^{6/11} \ll n \ll R^{6/5}} n^{-m/2 + \nu/2 + 1/14 + \ep}\\
+ &\sum_{n \gg R^{6/5}} n^{-m/2 + \nu/2 + 1/4 + \ep} + \lim_{N \goto \oo} N^{-m/2 + \nu/2} (N^{5/4} + N^{15/14} R^{3/14})\\
\ll &n^\ep|_{n=1}^{R^{1/2}} + R^{3/14} n^{-3/7 + \ep}|_{n=R^{6/5}}^{R^{1/2}} + n^{-1/4 + \ep}|_{n=\oo}^{R^{6/5}} \ll R^\ep
\end{align*}
Moreover, for $M > N \gg R^{6/5}$ we have
\begin{align*}
\sum_{N < |\xi|^2 \le M} \frac{Q(\xi) e(R|\xi| + \bh \cd \xi)}{|\xi|^m} &\ll_{\ep,Q} n^{-1/4 + \ep}|_{n=\oo}^{N} +  N^{-m/2 + \nu/2} (N^{5/4} + N^{15/14} R^{3/14})\\
&\goto_{\ep,Q} 0,\, {\rm as}\, N \goto \oo
\end{align*}
\end{proof}

\begin{thm}\label{S_fPR}
Suppose $P$ is a homogeneous polynomial of degree $\nu \ge 0$, not necessarily having zero mean on the sphere, $R \ge 1$ and $H \le 1$, then
\[
S_{f,P}(R) = \int_{\R^3} P(\bx) g(\bx) d\bx + O_{\ep,P} R^\nu H^{-\ep} (RH^{-1/2} + R^{17/14} H^{-1/7})
\]
\end{thm}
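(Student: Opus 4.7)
The plan is to apply Poisson summation to $g_P$ via Lemma \ref{Poissonsum}, then bound the two resulting Fourier series; call them $A_1$ (the single-sine sum on the second line of Lemma \ref{Poissonsum}) and $A_2$ (the $\sin\,\cos$ sum on the third line). The uniform convergence needed to justify Poisson is immediate from the last clause of Lemma \ref{sumxi}, and the integral over $\R^3$ yields the stated main term. The sum $A_1$ is easy: writing $\sin(2\pi R|\xi|+\pi\nu_1/2)$ as a linear combination of $e(\pm R|\xi|)$ puts each inner sum under Lemma \ref{sumxi}, whose hypothesis $m\ge\nu+3$ is satisfied because $3+\nu_1+2\nu_2\ge\nu+3$. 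Each inner sum is then $O_{\ep,P}(R^\ep)$, so multiplying by $R^{\nu_1}$ and summing over $\nu_1+\nu_2=\nu$ gives $A_1=O_{\ep,P}(R^{\nu+\ep})$, absorbed into the target bound since $RH^{-1/2}\ge R$ for $H\le 1$.

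The sum $A_2$ is the heart of the matter. As the computation of $\hat g$ in the proof of Lemma \ref{Poissonsum} shows, a prefactor of order $R/H$ accompanies these terms, so each summand has effective coefficient $(R/H)H^{\nu_1}(2R+H)^{\nu_2}$. The $\nu_1\ge 1$ terms have coefficient at most $R^{\nu+1}H^{\nu_1-1}\le R^{\nu+1}$ and inner sum $O(R^\ep)$ by Lemma \ref{sumxi}, so they fit inside the budget. The crux is the $\nu_1=0$ contribution, of coefficient $\asymp R^{\nu+1}/H$: a naive expansion of $\sin(\pi H|\xi|)\cos(\pi(2R+H)|\xi|)$ into a difference of two sines followed by Lemma \ref{sumxi} gives only $O(R^{\nu+1+\ep}/H)$, far too weak. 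To capture the cancellation inside $\sin(\pi H|\xi|)$, split the $\xi$-sum at the natural scale $|\xi|^2\asymp 1/H^2$: on $|\xi|^2\le 1/H^2$, use $|\sin(\pi H|\xi|)|\le \pi H|\xi|$ to cancel the $R/H$ factor down to $R^{\nu+1}$, reducing to a sum of the form $\sum_{|\xi|^2\le 1/H^2} Q'(\xi)\, e(\pm(R+H/2)|\xi|)/|\xi|^{\nu+2}$ with $\deg Q'=\nu+1$; on $|\xi|^2>1/H^2$, bound $|\sin|\le 1$ and use the residual $|\xi|^{-3-\nu}$ decay together with Lemma \ref{sumxi} to dominate the tail.

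The main obstacle is the Abel summation of this low-frequency piece against the partial sums $V_{N,Q',\bh}(R)$ from Lemma \ref{V_NQR}, which is what produces the specific terms $RH^{-1/2}$ and $R^{17/14}H^{-1/7}$. With cutoff $N_*\asymp 1/H^2$, exactly two regimes of Lemma \ref{V_NQR} become active: the trivial bound $V_N\ll N^{\nu/2+3/2+\ep}$ on $N\ll R^{1/2}$ (the diagonal of the first Weyl-differencing step) yields the $RH^{-1/2}$ contribution, and the Weyl-differencing bound $V_N\ll N^{\nu/2+15/14+\ep}R^{3/14}$ on $R^{1/2}\ll N\ll R^{6/5}$ (the off-diagonal regime) yields the $R^{17/14}H^{-1/7}$ contribution, depending on which range $1/H^2$ falls into. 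Tracking the Abel weights carefully enough for these specific exponents to emerge, and verifying that the high-frequency tail and all $\nu_1\ge 1$ terms of $A_2$ are indeed dominated, is the technical heart of the argument.
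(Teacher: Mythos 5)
Your architecture is the paper's: the $\bh$-twisted sum with uniform convergence from Lemma \ref{sumxi} (plus Parseval and continuity) to justify Poisson summation, Lemma \ref{sumxi} to dispose of $A_1$ and of the $\nu_1\ge 1$ part of $A_2$, and a split of the $\nu_1=0$ piece at $|\xi|\asymp H^{-1}$ followed by Abel summation against $V_{N,Q,\bh}(R)$. The problem is that the one step you defer as ``the technical heart'' is exactly where the theorem lives, and the sketch you give of it would not produce the stated bound. The reduction ``use $|\sin(\pi H|\xi|)|\le\pi H|\xi|$ \dots\ reducing to a sum of the form $\sum Q'(\xi)e(\pm(R+H/2)|\xi|)/|\xi|^{\nu+2}$'' is not a valid manipulation: a pointwise absolute-value bound on the sine cannot be combined with retaining the oscillatory factor, and the variant ``replace $\sin(\pi H|\xi|)$ by $\pi H|\xi|$ with error $O(H^3|\xi|^3)$'' is too lossy, since the error alone contributes $\asymp 1$ over $|\xi|\ll H^{-1}$ and hence $\asymp R^{\nu+1}H^{-1}$ after the prefactor. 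What actually works (and is what the paper does) is to keep $\sin(\pi H\sqrt N)\,N^{-3/2-\nu_2/2-\nu_3}$ inside the Abel weight on $N\le H^{-2}/4$: its difference is $\ll H N^{-2-\nu/2}$ (this is where both $|\sin(\pi H\sqrt N)|\ll H\sqrt N$ and its slow variation are used), and pairing it with Lemma \ref{V_NQR} in the form $V_N\ll_{\ep}N^{\nu/2+\ep}(N^{5/4}+N^{15/14}R^{3/14})$ for \emph{all} $N\le H^{-2}$ gives $H\sum_{N\ll H^{-2}}N^{\ep}(N^{-3/4}+N^{-13/14}R^{3/14})\ll H^{1/2-2\ep}+H^{6/7-2\ep}R^{3/14}$, dominated by $N\asymp H^{-2}$; multiplying by the prefactor $\asymp R^{1+\nu_2}H^{-1}\ll R^{\nu+1}H^{-1}$ yields $R^{\nu}H^{-\ep}(RH^{-1/2}+R^{17/14}H^{-1/7})$.

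Relatedly, your accounting of where the two terms come from is incorrect. The term $RH^{-1/2}$ does not come from the trivial bound $N^{\nu/2+3/2}$ on $N\ll R^{1/2}$ (which is not ``the diagonal of the first Weyl-differencing step''); it comes from the $N^{5/4}$ branch of Lemma \ref{V_NQR}, i.e.\ the diagonal term $N^3Y^{-1}$ with $Y\asymp\sqrt N$, evaluated at the top of the range $N\asymp H^{-2}$. If you used the trivial bound up to $H^{-2}$ you would get $H\sum_{N\le H^{-2}}N^{-1/2}\asymp 1$, hence only $R^{\nu+1}H^{-1}$ after the prefactor, which misses the theorem; and there is no case split ``depending on which range $1/H^2$ falls into'' --- both branches are summed and both terms appear. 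Finally, for the tail $|\xi|>H^{-1}/2$, invoking Lemma \ref{sumxi} with $|\sin|\le 1$ only gives $O(R^{\ep})$, which after the $H^{-1}$ prefactor is $R^{\nu+1+\ep}H^{-1}$, again too big; you need the same quantitative Abel summation there, with weight differences $\ll N^{-5/2-\nu/2}$ played against Lemma \ref{V_NQR}, which gives $H^{1/2}+H^{6/7}R^{3/14}$ once more. With these corrections your outline becomes the paper's proof; as written, the central estimate is both unproved and mis-described.
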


\begin{proof}
Consider the function
\[
S_{f,P}(\bh;R) = \int_{\R^3} P(\bx) g(\bx) d\bx + \sum_{\bxi \in \Z^3 \bksl 0} \hat g_P(\bxi) e(\bh \cd \bxi)
\]
where the summation over $\bxi$ in the second term on the right hand side is taken in the sense of Lemma \ref{sumxi}, and $\hat g_P(\bxi)$ (for $\bxi \in \Z^3 \bksl 0$) is given by
\begin{align*}
\hat g_P(\xi) &= \sum_{\nu_1 + \nu_2 = \nu} R^{\nu_1} \frac{Q_{\nu_1}(\xi) e(R|\xi|)}{|\xi|^{3 + \nu_1 + 2\nu_2}}\\
&+ \sum_{\nu_1 + \nu_2 + \nu_3 = \nu} RH^{\nu_1 - 1} (2R + H)^{\nu_2} \frac{\tld Q_{\nu_1, \nu_2}(\xi) \sin(\pi H|\xi| + \frac{\pi \nu_1}{2}) \cos(\pi(2R + H)|\xi| + \frac{\pi \nu_2}{2})}{|\xi|^{3 + \nu_1 + \nu_2 + 2\nu_3}}
\end{align*}

Since $3 + \nu_1 + 2\nu_2 \ge \nu + 3 \ge \deg Q_{\nu_1} + 3$, Lemma \ref{sumxi} applies to show that
\[
\sum_{\bxi \in \Z^3 \bksl 0} \sum_{\nu_1 + \nu_2 = \nu} R^{\nu_1} \frac{Q_{\nu_1}(\xi) e(R|\xi| + \bh \cd \bxi)}{|\xi|^{3 + \nu_1 + 2\nu_2}} \ll_{\ep,Q} R^{\nu + \ep}
\]
Similarly, since $3 + \nu_1 + \nu_2 + 2\nu_3 \ge \deg \tld Q_{\nu_1,\nu_2} + 3$, $H \le 1$, and $2R + H \ll R$,
\begin{align*}
&\sum_{\bxi \in \Z^3 \bksl 0} \sum_{\nu_1 + \nu_2 + \nu_3 = \nu \atop \nu_1 \ge 1} RH^{\nu_1 - 1} (2R + H)^{\nu_2} \frac{\tld Q_{\nu_1, \nu_2}(\xi) \sin(\pi H|\xi| + \frac{\pi \nu_1}{2}) \cos(\pi(2R + H)|\xi| + \frac{\pi \nu_2}{2}) e(\bh \cd \bxi)}{|\xi|^{3 + \nu_1 + \nu_2 + 2\nu_3}}\\
\ll_{\ep,\tld Q} &R^{\nu + \ep}
\end{align*}
(when $\nu = 0$, i.e. $P$ is constant, this sum actually vanishes), so we conclude that
\begin{align*}
S_{f,P,\bh}(R) &= \sum_{\bxi \in \Z^3 \bksl 0} \sum_{\nu_2 + \nu_3 = \nu} RH\inv (2R + H)^{\nu_2} \frac{\tld Q_{0, \nu_2}(\xi) \sin(\pi H|\xi|) \cos(\pi(2R + H)|\xi| + \frac{\pi \nu_2}{2}) e(\bh \cd \bxi)}{|\xi|^{3 + \nu_2 + 2\nu_3}}\\
&+ O_{\ep,P}(R^{\nu + \ep})
\end{align*}
By Abel summation and Lemma \ref{V_NQR} we have
\begin{align*}
&\sum_{0 < |\xi| \le H\inv/2} \frac{\tld Q_{0, \nu_2}(\xi) \sin(\pi H|\xi|) \cos(\pi(2R + H)|\xi| + \frac{\pi \nu_2}{2}) e(\bh \cd \bxi)}{|\xi|^{3 + \nu_2 + 2\nu_3}}\\
= &\sum_{n=1}^{H^{-2}/4} \frac{\sin(\pi H\sqrt n)}{n^{3/2 + \nu_2/2 + \nu_3}} \sum_{\xi \in \Z^3 \atop |\xi|^2 = n} \tld Q_{0, \nu_2}(\xi) \cos\left( \pi(2R + H)|\xi| + \frac{\pi \nu_2}{2} \right) e(\bh \cd \bxi)\\
= &\sum_{N=1}^{H^{-2}/4} \Dl_N \left( \frac{\sin(\pi H\sqrt N)}{N^{3/2 + \nu_2/2 + \nu_3}} \right) \sum_{\xi \in \Z^3\atop 0 < |\xi|^2 \le N} \tld Q_{0, \nu_2}(\xi) \cos\left( \pi(2R + H)|\xi| + \frac{\pi \nu_2}{2} \right) e(\bh \cd \bxi)\\
+ &O(H^{3 + \nu_2 + 2\nu_3} V_{H^{-2}/4,P,\bh}(R))\\
\ll &H\sum_{N=1}^{H^{-2}/4} N^{-2-\nu/2} |V_{N,P,\bh}(R)| + H^{3 + \nu} |V_{H^{-2}/4,P,\bh}(R)|\\
\ll_{\ep,P} &H \sum_{1 \le N \ll H^{-2}} N^\ep(N^{-3/4} + N^{-13/14} R^{3/14}) + H^{1/2} + H^{6/7} R^{3/14}\\
\ll &HN^\ep (N^{1/4} + N^{1/14} R^{3/14})|_1^{H^{-2}} + H^{1/2} + H^{6/7} R^{3/14}\\
= &H^{-2\ep} (H^{1/2} + H^{6/7} R^{3/14})
\end{align*}
where $\Dl_N$ denotes the difference in the variable $N$, and similarly
\begin{align*}
&\sum_{|\xi| > H\inv/2} \frac{\tld Q_{0, \nu_2}(\xi) \sin(\pi H|\xi|) \cos(\pi(2R + H)|\xi| + \frac{\pi \nu_2}{2}) e(\bh \cd \bxi)}{|\xi|^{3 + \nu_2 + 2\nu_3}}\\
= &\sum_{N > H^{-2}/4} \Dl_N \left( \rc{N^{3/2 + \nu_2/2 + \nu_3}} \right) \sum_{\xi \in \Z^3 \atop 0 < |\xi|^2 \le N} \tld Q_{0, \nu_2}(\xi) \sin(\pi H|\xi|) \cos\left( \pi(2R + H)|\xi| + \frac{\pi \nu_2}{2} \right) e(\bh \cd \bxi)\\
\ll &\sum_{N \gg H^{-2}} N^{-5/2-\nu/2} |V_{N,P,\bh}(R)| \ll_{\ep,P} \sum_{N \gg H^{-2}} N^\ep(N^{-5/4} + N^{-10/7} R^{3/14})\\
\ll &N^\ep (N^{-1/4} + N^{-3/7} R^{3/14})|_\oo^{H^{-2}}\\
= &H^{-2\ep} (H^{1/2} + H^{6/7} R^{3/14})
\end{align*}

Combining the two parts we get
\[
S_{f,P}(\bh;R) = \int_{\R^3} P(\bx) g(\bx) d\bx + O_{\ep,P} R^\nu H^{-\ep} (RH^{-1/2} + R^{17/14} H^{-1/7})
\]
By Lemma \ref{sumxi} we know that the sum $S_{f,P}(\bh; R)$ converges uniformly in $\bh$. On the other hand, by Parseval identity we know that the sum converges in $L^2(\R^3/\Z^3)$ to
\[
g_P(\bh) = \sum_{\bx \in \Z^3} P(\bx + \bh) g(\bx + \bh)
\]
Since this is a continuous function, it is identical (everywhere) to the uniform limit $S_{f,P}(\bh; R)$. In particular, taking $\bh = 0$ yields
\begin{align*}
S_{f,P}(R) &= \sum_{\bx \in \Z^3} P(\bx) g(\bx) = S_{f,P}(0; R)\\
&= \int_{\R^3} P(\bx) g(\bx) d\bx + O_{\ep,P} R^\nu H^{-\ep} (RH^{-1/2} + R^{17/14} H^{-1/7})
\end{align*}
\end{proof}

\section{Estimating the Short Sum}
Now we turn to the short sum. Suppose $P$ is a homogeneous polynomial of degree $\nu > 0$ in three variables, now having zero mean on the sphere. Note that $\deg P = 0$ implies $P = 0$, which is trivial.

\begin{defi}
By the ``short sum" we mean the following:
\[
S_{f,P}(R, H) = \sum_{\bx \in \Z^3 \atop R \le |\bx| \le R+H} \rc{|\bx|} f(|\bx|)
\]
Moreover, we let
\[
a_n = \sum_{\bx \in \Z^3 \atop |\bx|^2 = n} P(\bx)
\]
and define the theta-series
\[
\th(z) = \sum_{n \in \N} a_n e(nz) = \sum_{\bx \in \Z^3} P(\bx) e(|\bx|^2 z)
\]
where as usual $e(z) = e^{2\pi iz}$.
\end{defi}

First we suppose that $P$ is harmonic, then by Lemma \ref{modular} in Appendix A, $\th$ is a cusp form of half integral weight $k = \nu + 3/2 \ge 5/2$ for $\G_0(4)$. For all $n \in \N$ we have:
\begin{lem}\label{Sarnak}(\cite{Sa}, Proposition 1.5.5)
\[
a_n \ll_{\ep,\th} n^{k/2 - 1/4 + \ep}
\]
\end{lem}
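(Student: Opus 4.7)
My plan is to take the arithmetic route, exploiting the explicit form $a_n = \sum_{|\bx|^2 = n} P(\bx)$. Using $|P(\bx)| \ll_P |\bx|^\nu$, the triangle inequality will immediately give
\[
|a_n| \ll_P n^{\nu/2} r_3(n),
\]
where $r_3(n) = \#\{\bx \in \Z^3 : |\bx|^2 = n\}$ counts representations of $n$ as a sum of three squares. Since $k/2 - 1/4 = \nu/2 + 1/2$, the lemma reduces to the classical bound $r_3(n) \ll_\ep n^{1/2 + \ep}$.

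For that bound I would invoke Gauss's three-square theorem: on squarefree $n \not\equiv 7 \pmod 8$, $r_3(n)$ is a small integer multiple of a class number $h(-n)$ or $h(-4n)$, and a general $n$ reduces to this case by pulling out the largest square divisor of $n$. The class number is then handled by Dirichlet's formula $h(-d) = (\sqrt d / \pi) L(1, \chi_{-d})$ together with the elementary bound $L(1, \chi_{-d}) \ll_\ep d^\ep$, which follows from partial summation and P\'olya--Vinogradov and needs no ineffective Siegel input.

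An alternative, more intrinsic approach would use only that $\th$ is a cusp form of weight $k$ for $\G_0(4)$. Since the theta multiplier has modulus $1$, $|\th(z)|^2 y^k$ is $\G_0(4)$-invariant; combined with the cusp condition, this gives $|\th(z)| \ll_\th y^{-k/2}$ on $\H$, and Fourier inversion $a_n = e^{2\pi n y} \int_0^1 \th(x+iy) e(-nx) dx$ with $y \asymp 1/n$ recovers only the Hecke bound $a_n \ll_\th n^{k/2}$. Shaving the extra $n^{1/4}$ in this generality would require Rankin--Selberg, i.e.\ $\sum_{n \le X} |a_n|^2 \ll_\th X^k$ followed by an $L^2$-smoothing/short-interval argument. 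That works for any cusp form but is considerably heavier than the arithmetic route, and is unnecessary for the specific $\th$ coming from a ternary quadratic form.

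I expect the only real obstacle to be the class number bound $r_3(n) \ll_\ep n^{1/2+\ep}$ itself, which is classical but not trivial; once it is in hand the remaining steps are pure bookkeeping, and the half-integral-weight cusp form structure of $\th$ plays no further role in establishing this convexity-type bound.
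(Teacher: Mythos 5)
Your proposal is correct in substance, but it takes a different route from the paper, which offers no proof at all: Lemma \ref{Sarnak} is quoted verbatim from Sarnak (Proposition 1.5.5 of \cite{Sa}), where it is established for an \emph{arbitrary} cusp form of half-integral weight $k$ by automorphic methods (Rankin--Selberg/Poincar\'e-series type arguments), not by the arithmetic of ternary forms. Your argument instead exploits the specific shape $a_n=\sum_{|\bx|^2=n}P(\bx)$, reducing the claim via $|a_n|\ll_P n^{\nu/2}r_3(n)$ and $k/2-1/4=\nu/2+1/2$ to the classical bound $r_3(n)\ll_\ep n^{1/2+\ep}$, which indeed follows from Gauss's class-number formula for $r_3$ together with $h(-d)\ll \sqrt d\,L(1,\chi_{-d})$ and the effective estimate $L(1,\chi_{-d})\ll\log d$; no Siegel-type input is needed for an upper bound, as you say. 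Two small points: the reduction from general $n$ to the class-number case is cleaner via $r_3(n)=\sum_{d^2\mid n}r_3^{\mathrm{prim}}(n/d^2)$ with each primitive count controlled by a class number (pulling out only the largest square divisor does not literally give $r_3(t^2m)=r_3(m)$), the divisor sum costing only $n^\ep$; and your proof, unlike the cited proposition, says nothing about general half-integral weight cusp forms --- which is harmless here since the paper applies Lemma \ref{Sarnak} only to this theta series, but note that the cuspidality/modularity machinery is still genuinely needed elsewhere, namely for the stronger Blomer--Harcos bound of Lemma \ref{Harcos} that drives Theorem \ref{S_fPRH}. What your route buys is an elementary, effective, self-contained proof adequate for the paper's use; what the cited general result buys is uniformity over all cusp forms and independence from the ternary-quadratic-form interpretation.
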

\begin{lem}\label{Harcos}(\cite{BH}, Corollary 2)
\[
a_n \ll_{\ep,\th} n^{k/2 - 5/16 + \ep} (n, 2^\oo)^{5/8}
\]
\end{lem}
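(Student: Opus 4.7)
The plan is to invoke the standard machinery of the Shimura correspondence together with subconvexity of twisted $L$-functions, since this lemma (cited directly as Corollary 2 of \cite{BH}) is a deep refinement of Lemma \ref{Sarnak} that cannot be approached by the bookkeeping used elsewhere in this paper.

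First I would pass from $\th$ to an integral weight form. Since $\th$ is a cusp form of half-integral weight $k = \nu + 3/2$ for $\G_0(4)$, the Shimura correspondence produces a cuspidal Hecke eigenform $f$ of integral weight $2k-1$ on a congruence subgroup of $SL_2(\Z)$. Waldspurger's formula then relates $|a_n|^2$ for odd squarefree $n$ to the central value of the twisted $L$-function:
\[
|a_n|^2 = c_f \, n^{k-1} L(1/2, f \otimes \chi_n)
\]
up to local factors at ramified places. Consequently, any subconvex bound $L(1/2, f \otimes \chi_n) \ll_\ep n^{1 - 2\a + \ep}$ translates directly into $|a_n| \ll_{\ep,\th} n^{k/2 - \a + \ep}$ for such $n$.

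The central analytic step --- the heart of \cite{BH} --- is to establish the hybrid subconvex exponent $\a = 5/16$, namely $L(1/2, f \otimes \chi_n) \ll_{\ep,f} n^{3/8 + \ep}$. I would prove this by an amplified second moment over quadratic twists: apply the Petersson or Kuznetsov trace formula to average over $\chi_d$ with $d \asymp n$, insert an amplifier isolating $f$, and bound the resulting off-diagonal Kloosterman sums using Weil's estimate. The main obstacle is engineering the amplifier and spectral side so that the diagonal and off-diagonal contributions balance at the $3/8$ exponent rather than the trivial convexity exponent $1/2$; this is where the new input of \cite{BH} over prior work lies.

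Finally, extending from odd squarefree $n$ to general $n$ is the most delicate bookkeeping step. For $n = n_1 m^2$ with $n_1$ odd squarefree, Shimura's Hecke relations express $a_n$ as a convolution of $a_{n_1}$ against local factors of size $O(n^\ep)$, which are absorbed in the stated $\ep$. The prime $2$, however, requires separate treatment because it is the ramified prime for $\G_0(4)$ and the local representation at $2$ is non-tempered in the half-integral weight setting; inputting only the trivial (pointwise) bound at the 2-part and tracking how the loss compounds against the sharper odd-part bound produces the explicit factor $(n, 2^\oo)^{5/8}$ in the final statement.
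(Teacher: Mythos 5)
There is no internal proof of this statement to compare against: the paper takes Lemma \ref{Harcos} verbatim from the literature, the ``proof'' being the citation of Corollary 2 of \cite{BH}. Your reconstruction of the architecture behind that corollary is essentially right: the Shimura lift to an integral-weight form $f$ of weight $2k-1$, Waldspurger's formula relating $|a_n|^2$ for (odd, squarefree) $n$ to $n^{k-1}L(1/2, f\otimes\chi_n)$, the translation of a subconvex bound $L(1/2,f\otimes\chi_n)\ll n^{3/8+\ep}$ into $a_n\ll n^{k/2-5/16+\ep}$, and a reduction of general $n$ to its odd squarefree part with the $2$-part tracked separately --- this is indeed how Blomer and Harcos deduce their Corollary 2 from their main subconvexity theorem, and your exponent arithmetic ($\a=5/16$ versus $1-2\a=3/8$) is correct.

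The genuine gap is that the entire content of the lemma sits in the one step you do not prove: the Burgess-quality exponent $3/8$ for quadratic twists. Your sketch (amplify, apply Petersson/Kuznetsov, bound the off-diagonal by Weil's estimate for Kloosterman sums) is not an adequate description of how that exponent is obtained; the actual argument in \cite{BH} rests on estimates for shifted convolution sums via the spectral theory of automorphic forms (Jutila's variant of the circle method, Bykovskii-type input), and no amount of Weil-bound bookkeeping in a plain amplified second moment is known to reach $3/8$. Saying ``this is where the new input of \cite{BH} lies'' identifies the hard point but does not close it, so the proposal is a plan, not a proof. Two secondary points would also need real work: Waldspurger's proportionality constant depends on local data and must be shown uniformly bounded in $n$ (explicit Baruch--Mao-type versions, with care at the ramified prime $2$ and over square classes), and the precise factor $(n,2^\oo)^{5/8}$ is a specific computation in \cite{BH}, not something that falls out of ``the trivial pointwise bound at the $2$-part'' as asserted. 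For the purposes of this paper the correct move is the one the paper makes --- cite \cite{BH}; if you want a self-contained proof, the subconvexity estimate itself must be proved, and that is a paper-length undertaking.
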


\begin{thm}\label{S_fPRH}
Suppose $P$ is a homogeneou polynomial of degree $\nu \ge 0$ in three variables, having zero mean on the sphere. Then
\[
S_{f,P}(R,H) \ll_{\ep,P} R^{\nu+\ep} (R^{15/8}H + R)
\]
\end{thm}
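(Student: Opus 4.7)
The plan is to reduce to the case where $P$ is a single harmonic spherical and then bound the resulting sum through the Fourier coefficients of the associated half-integral-weight theta series. Using the Folland decomposition cited earlier, write $P(\bx) = \sum_k |\bx|^{2k} h_k(\bx)$ with each $h_k$ harmonic of degree $\mu_k = \nu - 2k$; the zero-mean hypothesis on $P$ forces every appearing $h_k$ to have $\mu_k \ge 1$. Since $|\bx|^{2k} = O(R^{2k})$ and $f(|\bx|)/|\bx|\le 1$ on the shell, it suffices by linearity and the triangle inequality to prove $|S_{f,h}(R,H)| \ll R^{\mu+\ep}(R^{15/8}H+R)$ for each harmonic $h$ of degree $\mu\ge 1$, the factor $R^{2k}$ combining with $R^\mu$ to give the promised $R^\nu$.

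For such harmonic $h$, Lemma~\ref{modular} of Appendix~A identifies $\th(z) = \sum_{n\ge 1} a_n e(nz)$, with $a_n = \sum_{|\bx|^2 = n} h(\bx)$, as a cusp form of half-integral weight $k = \mu + 3/2 \ge 5/2$ on $\G_0(4)$. Collecting lattice points by $n = |\bx|^2$ and using $|f(\sqrt n)/\sqrt n|\le 1$ on the shell,
\[
|S_{f,h}(R,H)| \le \sum_{R^2 \le n \le (R+H)^2} |a_n|.
\]
Two cusp-form bounds are now available: Lemma~\ref{Sarnak} gives the GCD-free $|a_n| \ll_\ep R^{\mu+1+\ep}$ uniformly, while Lemma~\ref{Harcos} gives the sharper-for-typical-$n$ bound $|a_n| \ll_\ep R^{\mu + 7/8 + \ep}\cdot 2^{5v_2(n)/8}$, where $v_2(n)$ is the $2$-adic valuation. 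These two bounds agree precisely when $2^{v_2(n)} \asymp R^{1/5}$.

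The crux is to interpolate between them dyadically in $v_2(n)$. Setting $J := \lfloor (1/5)\log_2 R\rfloor$, apply Blomer--Harcos on $\{n:v_2(n)\le J\}$ and Sarnak on $\{n:v_2(n)> J\}$. With $|I| := (R+H)^2 - R^2 \asymp RH$ and the pigeonhole count $\#\{n\in I:v_2(n)=j\} \le |I|/2^{j+1} + 1$, a routine geometric-series computation gives
\[
\sum_{v_2(n)\le J}|a_n| \ll R^{\mu+7/8+\ep}\bigl(|I| + 2^{5J/8}\bigr) \ll R^{\mu+15/8+\ep}H + R^{\mu+1+\ep},
\]
while the tail $\sum_{v_2(n)>J}|a_n| \ll R^{\mu+1+\ep}(|I|/2^J + \log R) \ll R^{\mu+9/5+\ep}H + R^{\mu+1+\ep}$ is subsumed in the previous since $9/5<15/8$. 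The main obstacle is exactly this interpolation: pure Blomer--Harcos wastes a factor of $R^{9/8}$ at the rare highly $2$-divisible $n$ in the window, and pure Sarnak loses $R^{1/8}$ on the generic $n$, so switching at the balance point $J$ is essential to recover both parts of the stated bound.
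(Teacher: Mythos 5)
Your proposal is correct and follows essentially the same route as the paper: reduce via the Folland decomposition to harmonic components (the zero-mean hypothesis killing the constant term), pass to the cusp form $\th$, and bound $\sum |a_n|$ over the shell by combining Lemma~\ref{Harcos} on the $n$ of low $2$-adic valuation with Lemma~\ref{Sarnak} on the highly $2$-divisible $n$, using a dyadic count in $v_2(n)$. The only difference is cosmetic: you switch bounds at the balance point $2^{v_2(n)}\asymp R^{1/5}$, while the paper cuts at $(n,2^\oo)\ll RH$; both bookkeepings yield $R^{\nu+\ep}(R^{15/8}H+R)$.
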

\begin{proof}
By Corollary 2.50 of \cite{Fo}, we can write
\[
P(\bx) = \sum_{d=0}^{[\nu/2]} |\bx|^{2d} P_{\nu - 2d}(\bx)
\]
where $P_{\nu - 2d}$ is a homogeneous harmonic polynomial of degree $\nu - 2d$.

If $\nu$ is odd, then there is no term $P_0$ in the sum (and actually $a_n$ vanishes identically by considering the symmetry $\bx \goto -\bx$). If $\nu$ is even, then integration over the sphere gives
\[
0 = \int_{S^2} P(\bx) = \sum_{d=0}^{[\nu/2]} \int_{S^2} P_{\nu - 2d}(\bx) = P_0
\]
so the sum does not include $P_0$ either. Now by Lemma \ref{Sarnak} and Lemma \ref{Harcos},
\begin{align*}
|S_{f,P}(R,H)| &= \left| \sum_{n=R^2}^{(R+H)^2} \frac{f(n) a_n}{n} \right| \le \sum_{n=R^2}^{(R+H)^2} |a_n|\\
&\ll_{\ep,P} \sum_{d=0}^{[\nu/2]} R^{2d} \left( R^{\nu -2d + 3/2 - 5/8 + \ep} \sum_{n=R^2 \atop (n, 2^\oo) \ll RH}^{(R+H)^2} (n, 2^\oo)^{5/8} + R^{\nu - 2d + 1 + \ep} \right)\\
&\ll R^{\nu+\ep} \left( R^{7/8} \sum_{k \ll \log RH} \frac{RH}{2^k} 2^{5k/8} + R \right)\\
&\ll R^{\nu+\ep} (R^{15/8}H + R)
\end{align*}
\end{proof}

\section{Appendix A: Modularity of the Theta Function}
Suppose $n \ge 2$. Let $\C[\bx]$ denote a polynomial with complex coefficients in the variable $\bx \in \Cn$. Let $\C[\bx]_\nu$ be the subspace of $\C[\bx]$ consisting of homogeneous polynomials of degree $\nu$. Let $H[\bx]$ denote the subspace of harmonic polynomials.

\begin{lem}\label{decomp}
$\C[\bx]_\nu \cap H[\bx] = \spn_\C\{(\sum_{j=1}^n a_j x_j)^\nu, (a_j)_{j=1}^n \in \Cn, \sum_{j=1}^n a_j^2 = 0\}$.
\end{lem}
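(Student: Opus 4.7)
The plan is to work with the Fischer Hermitian inner product on $\C[\bx]$ defined by $\langle P, Q\rangle := (P(\de)\conj Q)(0)$, where $\conj Q$ denotes the coefficient-conjugate of $Q$. Under this pairing the monomials $\bx^\alpha$ are orthogonal with $\|\bx^\alpha\|^2 = \alpha!$, so it is positive definite, and it satisfies the adjoint relation $\langle x_j P, Q\rangle = \langle P, \de_j Q\rangle$; in particular, multiplication by $|\bx|^2 := \sum_j x_j^2$ is adjoint to the Laplacian $\Dl = \sum_j \de_j^2$.

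The inclusion $\supseteq$ is immediate, since $\Dl(a\cd\bx)^\nu = \nu(\nu-1)(\sum_j a_j^2)(a\cd\bx)^{\nu-2} = 0$ whenever $\sum_j a_j^2 = 0$. For the reverse inclusion, let $V$ denote the right-hand span, and suppose for contradiction that some nonzero $P \in \C[\bx]_\nu \cap H[\bx]$ is orthogonal to $V$. The key identity is
\[
\langle P, (a\cd\bx)^\nu\rangle = P(\de)(\conj a\cd\bx)^\nu = \nu!\,P(\conj a),
\]
valid for any $a \in \Cn$ and any $P \in \C[\bx]_\nu$; it can be verified first on the spanning set $P = (b\cd\bx)^\nu$, where it reduces to $(b\cd\de)^\nu(\conj a\cd\bx)^\nu = \nu!(b\cd\conj a)^\nu$, and then extended by linearity. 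The orthogonality assumption thus forces $P(\conj a) = 0$ whenever $\sum_j a_j^2 = 0$; since the null cone is stable under conjugation of $a$, $P$ in fact vanishes on all of $\{\bx \in \Cn: \sum_j x_j^2 = 0\}$.

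Next I would appeal to Hilbert's Nullstellensatz. For $n \ge 3$ the form $\sum_j x_j^2$ is irreducible in $\C[x_1,\ldots,x_n]$; for $n = 2$ it factors as $(x_1+ix_2)(x_1-ix_2)$ into two distinct coprime linear forms. In either case it is squarefree, so the ideal it generates is radical, and the vanishing of $P$ on its zero locus gives $|\bx|^2 \mid P$, i.e., $P(\bx) = |\bx|^2 R(\bx)$ for some $R \in \C[\bx]_{\nu-2}$. Applying the adjoint relation,
\[
\|P\|^2 = \langle |\bx|^2 R, P\rangle = \langle R, \Dl P\rangle = 0,
\]
and positive definiteness forces $P = 0$, a contradiction.

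The only real subtlety is the Nullstellensatz step when $n = 2$, where the quadric is reducible; but since its two linear factors are coprime, divisibility of $P$ by each forces divisibility by their product, so the argument goes through uniformly. Everything else — the monomial orthogonality, the adjoint property, and the identity $\langle P, (a\cd\bx)^\nu\rangle = \nu!\,P(\conj a)$ — is a routine verification.
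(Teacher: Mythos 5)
Your proposal is correct and follows essentially the same route as the paper: the Fischer-type inner product with $\|\bx^\a\|^2=\a!$, the adjointness of multiplication by $|\bx|^2$ with the Laplacian, the evaluation identity $\lang P,(a\cd\bx)^\nu\rang=\nu!\,P(\conj a)$, and Hilbert's Nullstellensatz together with squarefreeness of $\sum_j x_j^2$ (irreducible for $n\ge 3$, two coprime linear factors for $n=2$). The only differences are cosmetic: you argue by contradiction inside the harmonic subspace and track the Hermitian conjugation explicitly, whereas the paper identifies $L^\perp=\ran M$ directly and suppresses the conjugation.
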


\begin{proof}
Define a positive definite inner product on $\C[\bx]$ by letting $\lang \bx^\a, \bx^\b \rang = \a! \d_{\a\b}$. The Laplacian $\Dl$ and the multiplication operator $M(f) = f \cd |\bx|^2$ operate on $\C[\bx]$. It is easy to check that they are adjoints under this inner product. Therefore
\[
H = \ker \Dl = (\ran M)^\perp
\]

We restrict the whole space to $\C[\bx]_\nu$, where the inner product is still positive definite, and denote the right hand side by $L$. It suffices to show that
\[
L = (\ran M)^\perp
\]
Since $\C[\bx]_\nu$ is finite dimensional, it is equivalent to
\[
L^\perp = \ran M
\]

For $f(\bx) = \sum_{|\a|=d} c_\a \bx^\a \in \C[\bx]_\nu$ we have
\[
\lang \left( \sum_{j=1}^n a_j x_j \right)^\nu, f(\bx) \rang = \lang \sum_{|\a|=d} \frac{|\a|!}{\a!} \ba^\a \bx^\a, \sum_{|\a|=d} c_\a \bx^\a \rang = |\a|! \sum_{|\a|=d} c_\a \ba^\a = |\a|! f(\ba)
\]
Therefore
\[
f \in L^\perp \equivto f(a_i) = 0,\, \all (a_i) \in \Cn, \sum_{i=1}^n a_i^2 = 0 \equivto |\bx|^2|f(\bx) \equivto f \in \ran M
\]
where we have used Hilbert's Nullstellensatz and the fact that $(|\bx|^2)$ is a radical ideal in $\C[\bx]$ when $n \ge 2$. (When $n \ge 3$ it is actually irreducible, or equivalently, prime, in the UFD $\C[\bx]$. When $n = 2$ we have $|\bx|^2 = (x_1 + ix_2)(x_1 - ix_2)$ does not have repeated factors.)
\end{proof}

Now suppose $p_\nu \in \C[\bx]_\nu \cap H[\bx]$. Let $\th(z) = \sum_{\bx \in \Zn} p_\nu(\bx) e(|\bx|^2 z)$. Then $\th$ is holomorphic on the upper half plane $\H$.

\begin{lem}\label{modular}
$\th$ is a cusp form of weight $\nu+n/2$ on $\G_0(4) \bksl H$, where
\[
\G_0(N) = \left\{
\begin{pmatrix}
a & b\\
c & d
\end{pmatrix}
\in SL_2(\Z), N|c \right\}
\]
More precisely, for every $\g \in \G_0(4)$ we have
\[
\th(\g z) = j(\g, z)^{2\nu + n} \th(z)
\]
where
\[
j(\g, z) = \Legendre c d \ep_d\inv (cz+d)^{1/2}
\]
where $\Legendre c d$ is the Legendre symbol as extended by Shimura (\cite{Sh}, point 3 in Notation and Terminology), $\ep_d =
\begin{cases}
1, d = 1 \mod 4\\
i, d = 3 \mod 4
\end{cases}$, and all square roots are taken in the principal branch ($\sqrt{-1}$ is taken to be $e(+1/4)$.)
\end{lem}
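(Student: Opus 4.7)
The plan is to follow the classical Hecke--Schoeneberg--Shimura argument, adapted to the polynomial weight. By Lemma \ref{decomp} and the linearity of $\th$ in $p_\nu$, it suffices to prove the result when $p_\nu(\bx)=\ell(\bx)^\nu$ for a null linear form $\ell(\bx)=\sum_j a_j x_j$ with $\sum_j a_j^2=0$. The null condition is essential: it implies $\Dl(\ell^m)=0$ for every $m\ge 0$, so by the orthogonality of nonconstant spherical harmonics against radial functions, the Gaussian integral $\int_\Rn \ell(\by)^m e^{2\pi i z|\by|^2}\,d\by$ vanishes for $m\ge 1$. This single fact is what makes the Poisson-summation step work cleanly. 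Holomorphy of $\th$ on $\H$ is immediate, since on compact subsets of $\H$ the polynomial factor is absolutely dominated by Gaussian decay.

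Next I would verify the transformation law on generators of $\G_0(4)$. The translation $T=\begin{pmatrix}1&1\\0&1\end{pmatrix}$ gives $\th(z+1)=\th(z)$ trivially, with $j(T,z)=1$, since $|\bx|^2\in\Z$. For the Fricke-type involution $z\mapsto -1/(4z)$ I would apply Poisson summation on $\Zn$: after completing the square one expands $\ell(\by+\bxi/(2z))^\nu$ by the binomial theorem, uses the vanishing of $\int \ell(\by)^m e^{2\pi i z|\by|^2}\,d\by$ for $m\ge 1$ to collapse the sum to a single term, and reads off
\[
\sum_{\bx\in\Zn}\ell(\bx)^\nu e(|\bx|^2 z) \;=\; C_n\,(2z)^{-\nu}\,z^{-n/2}\sum_{\bxi\in\Zn}\ell(\bxi)^\nu \,e(-|\bxi|^2/(4z)),
\]
which produces exactly the desired $(cz+d)^{\nu+n/2}$-type weight together with an explicit eighth-root-of-unity prefactor.

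To extend from these two generators to all of $\G_0(4)$, I would invoke the standard cocycle argument carried out in \cite{Sh}: since $\G_0(4)$ is generated modulo signs by $T$ together with elements conjugate to the Fricke involution, composing the transformation laws and checking the multiplier on a few test matrices (e.g.\ those with $c$ a small prime) identifies the resulting cocycle with $j(\g,z)^{2\nu+n}$, reproducing Shimura's extended Legendre symbol $\Legendre{c}{d}$ and the $\ep_d$ factor through quadratic reciprocity and branch conventions for the square root. For cuspidality, the constant term of $\th$ at $\infty$ equals $p_\nu(0)=0$ since $\nu\ge 1$; at the remaining cusps $0$ and $1/2$ of $\G_0(4)$, conjugating by the appropriate scaling matrix transforms $\th$ into a theta series of the same shape attached to a rescaled sublattice, where the homogeneity of $p_\nu$ once again forces the constant term to vanish.

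The hard part will be pinning down the exact eighth-root-of-unity factor so that it agrees with Shimura's convention $j(\g,z)=\Legendre{c}{d}\,\ep_d^{-1}(cz+d)^{1/2}$ — careful sign and principal-branch bookkeeping rather than a conceptual obstacle. The null-form Fourier identity above is clean, but reconciling its multiplier with the extended Legendre symbol on all of $\G_0(4)$ requires a cocycle verification on sufficiently many test matrices, and this is where the proof would lean most directly on the explicit computations already done in \cite{Sh}.
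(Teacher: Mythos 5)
Your outline shares the paper's analytic core --- reduce via Lemma \ref{decomp} to $p_\nu=\ell^\nu$ with $\sum_j a_j^2=0$, and use Poisson summation with the nullity of $\ell$ collapsing the polynomial factor (the paper does this by the inductive derivative computation (\ref{innerexp}); your appeal to Hecke's identity is the same mechanism) --- but the global structure is genuinely different. The paper never argues by generators: for an arbitrary $\g\in\G_0(4)$ with $c\neq 0$ it writes $\frac{az+b}{cz+d}=\frac ac-\frac1{c(cz+d)}$, splits $\Zn$ into residue classes $\bm$ mod $c$, applies Poisson summation to each class, and then evaluates the resulting complete quadratic Gauss sums $\sum_{m\in\Z/c\Z}e(dm^2/c)$ by the classical formula; the multiplier $\Legendre cd\ep_d\inv$ falls out of that evaluation directly, so no a posteriori identification of a cocycle is needed. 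Your route (transformation under $T$ and $z\mapsto-1/(4z)$, then conjugation and generation) is a legitimate classical alternative and buys a shorter computation, at the cost of pushing all the arithmetic content into the multiplier identification.

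That identification is the one soft spot in your plan. "Checking the multiplier on a few test matrices (e.g.\ those with $c$ a small prime)" does not work as stated: every $\g\in\G_0(4)$ has $4\mid c$, so no such matrices exist, and agreement on finitely many elements proves nothing unless those elements generate and you additionally know that $j(\g,z)^{2\nu+n}$ is itself a consistent automorphy factor on $\G_0(4)$ --- a fact equivalent to Shimura's transformation law for the standard theta series, i.e.\ exactly the Gauss-sum computation the paper carries out. To make your route airtight, verify the law on the actual generators $T$, $-I$ and $\begin{pmatrix}1&0\\-4&1\end{pmatrix}=W_4^{-1}TW_4$ (the Fricke matrix $W_4$ itself is not in $\G_0(4)$), and then either invoke the cocycle property of Shimura's $j$ or, more cleanly, show that $\th(z)/\th_0(z)^{2\nu+n}$ (with $\th_0$ the standard theta, non-vanishing on $\H$) is invariant under these generators. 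Your cuspidality argument via constant terms at the cusps $\infty,0,1/2$ is fine for $\nu\ge 1$ (note the claim fails for $\nu=0$, as the paper remarks), and corresponds to the paper's observation that the Poisson-summed expression (\ref{Poissonsummod}) decays rapidly at every cusp.
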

\begin{proof}
If $\nu$ is odd then $\th = 0$ by considering the symmetry $\bx \goto -\bx$. Therefore we assume $\nu$ to be even.

By Lemma \ref{decomp},
\[
\th \in \spn_{\C} \sum_{\bx \in \Zn} \left( \sum_{j=1}^n a_j x_j \right)^\nu e(|\bx|^2 z),\, \sum_{j=1}^n a_j^2 = 0
\]
Therefore we may assume that $\th$ is one of the basis vectors on the right hand side.

First we suppose $
\begin{pmatrix}
a & b\\
c & d
\end{pmatrix} \in SL_2(\Z)$. If $c = 0$, then $a = d = \pm 1$. In either case we have $j(\g, z) = 1$, consistent with the fact that $\th(z + b) = \th(z)$ for any $b \in \Z$.

Now we suppose $c \neq 0$, then
\begin{align}
\nonumber \th\left( \frac{az+b}{cz+d} \right) &= \th\left( \frac a c - \rc{c(cz+d)} \right)\\
\nonumber &= \sum_{\bm \in (\Z/c\Z)^n} \sum_{\bx \in \Zn} \left( \sum_{j=1}^n a_j (cx_j + m_j) \right)^\nu e\left( \left( \frac a c - \rc{c(cz+d)} \right) |c\bx + \bm|^2\right)\\
\label{thtrans} &= \sum_{\bm \in (\Z/c\Z)^n} e(a\bm^2/c) \sum_{\bx \in \Zn} \left( \sum_{i=1}^n a_j (cx_j + m_j) \right)^\nu e\left( - \frac{c}{cz+d} \left| \bx + \frac \bm c \right|^2\right)
\end{align}

By Poisson summation formula,
\begin{align*}
&\sum_{\bx \in \Zn} \left( \sum_{j=1}^n a_j (cx_j + m_j) \right)^\nu e\left( - \frac{c}{(cz+d)} \left| \bx + \frac \bm c \right|^2\right)\\
= &\sum_{\bxi \in 2\pi\Zn} \exp\left( \frac{i\bm \cd \bxi}{c} \right) \left( \sum_{j=1}^n a_j ci\de_{\xi_j} \right)^\nu \frac{(2\pi)^{n/2}}{\left( \frac{4\pi ic}{cz+d} \right)^{n/2}} e\left( -\rc{2\pi i} \frac{cz+d}{8\pi ic} |\bxi|^2 \right)\\
= &(ci)^\nu \sum_{\bxi \in 2\pi\Zn} \exp\left( \frac{i\bm \cd \bxi}{c} \right) \left( \frac{cz+d}{2ic} \right)^{n/2} \left( \sum_{j=1}^n a_j \de_{\xi_j} \right)^\nu e\left( \rc{16\pi^2} \left( z + \frac{d}{c} \right) |\bxi|^2 \right)
\end{align*}

We compute the derivative on the rightmost exponential inductively.
\[
\left( \sum_{j=1}^n a_j \de_{\xi_j} \right) e\left( \rc{16\pi^2} \left( z + \frac{d}{c} \right) |\bxi|^2 \right)
= C(z, c, d, \ba, \bxi) e\left( \rc{16\pi^2} \left( z + \frac{d}{c} \right) |\bxi|^2 \right)
\]
where
\[
C(z, c, d, \ba, \bxi) = \frac{i}{4\pi} \left( z + \frac{d}{c} \right) \left( \sum_{j=1}^n a_j \xi_j \right)
\]

Suppose
\begin{equation}\label{innerexp}
\left( \sum_{j=1}^n a_j \de_{\xi_j} \right)^k e\left( \rc{16\pi^2} \left( z + \frac{d}{c} \right) \left| \bxi \right|^2\right)
= C(z, c, d, \ba, \bxi)^k e\left( \rc{16\pi^2} \left( z + \frac{d}{c} \right) |\bxi|^2 \right)
\end{equation}
Then
\begin{align*}
\left( \sum_{j=1}^n a_j \de_{\xi_j} \right)^{k+1} e\left( \rc{16\pi^2} \left( z + \frac{d}{c} \right) |\bxi|^2 \right) &= C(z, c, d, \ba, \bxi)^{k+1} e\left( \rc{16\pi^2} \left( z + \frac{d}{c} \right) |\bxi|^2 \right)\\
+ &kC(z, c, d, \ba, \bxi)^{k-1} \sum_{j=1}^n a_j \de_{\xi_j} C(z, c, d, \ba, \bxi)
\end{align*}
Since $\sum_{j=1}^n a_j^2 = 0$, it is easy to see that the second term on the right hand side vanishes, so by induction we have shown that (\ref{innerexp}) holds for all $k \in \N$. Therefore the inner sum of (\ref{thtrans}) is
\begin{align}
\nonumber &(-1)^\nu \sum_{\bxi \in 2\pi\Zn} \exp\left( \frac{i\bm \cd \bxi}{c} \right) \left( \frac{cz+d}{2ic} \right)^{n/2} \left[ \frac{cz+d}{4\pi} \left( \sum_{j=1}^n a_j \xi_j \right) \right]^\nu e\left( \rc{16\pi^2} \left( z + \frac{d}{c} \right) |\bxi|^2 \right)\\
\label{Poissonsummod} = &\frac{(-1)^\nu}{(2i)^{n/2}} \sum_{\bxi \in \Zn} e\left( \frac{\bm \cd \bxi}{c} \right) \left( z + \frac d c \right)^{n/2} \left( \frac{cz+d}{2} \right)^\nu P(\bxi)^\nu e\left( \rc{4} \left( z + \frac{d}{c} \right) |\bxi|^2 \right)
\end{align}

Now we supppose $4|c$ and carry out the summation over $\bm$ in (\ref{thtrans}).
\[
S_n(\bxi, a, c) = \sum_{\bm \in (\Z/c\Z)^n} e\left( \frac{a\bm^2 + \bm \cd \bxi}{c} \right) = \prod_{j=1}^n S(\xi_j, a, c)
\]
where
\[
S(\xi_j, a, c) = \sum_{m \in \Z/c\Z} e\left( \frac{am^2 + m\xi_j}{c} \right) = \sum_{m \in \Z/c\Z} e\left( \frac{d(m^2 + m\xi_j)}{c} \right)
\]

We now show that if $\xi_j$ is odd, then $S(\xi_j, a, c) = 0$. In fact, since $4|c$ and $(c, d) = 1$, $d$ is odd, so
\begin{align*}
2S(\xi_j, a, c) &= \sum_{m \in \Z/c\Z} \left[ e\left( \frac{d(m^2 + m\xi_j)}{c} \right) + e\left( \frac{d((m + c/2)^2 + (m + c/2)\xi_j)}{c} \right) \right]\\
&= \sum_{m \in \Z/c\Z} e\left( \frac{d(m^2 + m\xi_j)}{c} \right) [1 + e(d(m + c/4 + \xi_j/2))] = 0
\end{align*}
Therefore the sum over $\bxi$ in (\ref{thtrans}) is reduced to
\[
\frac{(-1)^\nu}{(2i)^{n/2}} \sum_{\bxi \in \Zn} \left( \prod_{j=1}^n S(2\xi_j, a, c) \right) \left( z + \frac d c \right)^{n/2} (cz+d)^\nu P(\bxi)^\nu e\left( \left( z + \frac{d}{c} \right) |\bxi|^2\right)
\]
where the sum over $\bm$ is reduced to
\[
S(2\xi_j, a, c) = \sum_{m \in \Z/c\Z} e\left( \frac{d(m^2 + 2m\xi_j)}{c} \right) = e\left( -\frac{d\xi_j^2}{c} \right) \sum_{m \in \Z/c\Z} e\left( \frac{dm^2}{c} \right)
\]

By the well-known Gau$\b$ sum,
\[
\sum_{m \in \Z/c\Z} e\left( \frac{dm^2}{c} \right) = 
\begin{cases}
(1 + i)\ep_d\inv \sqrt c \Legendre c d,\, c, d > 0\\
(1 - i)\ep_{-d} \sqrt c \Legendre c {-d},\, c > 0,\, d < 0\\
(1 - i)\ep_d \sqrt{-c} \Legendre {-c} d,\, c < 0,\, d > 0\\
(1 + i)\ep_{-d}\inv \sqrt{-c} \Legendre {-c} {-d},\, c, d < 0
\end{cases}
\]
the identities $\ep_{-d} = i\ep_d\inv$, $1 + i = \sqrt{2i}$,
\[
\begin{cases}
\sqrt{z + \frac d c}\sqrt c = \sqrt{cz + d},\, c > 0\\
\sqrt{z + \frac d c}\sqrt{-c} = i\sqrt{cz + d},\, c < 0
\end{cases}
\]
and the extension of the Legendre symbol by Shimura, the Gau$\b$ sum can be written uniformly as
\[
\sum_{m \in \Z/c\Z} e\left( \frac{dm^2}{c} \right) = 
\sqrt{2i}\ep_d\inv \sqrt{cz+d} \Legendre c d \left( z + \frac d c \right)^{-1/2}
\]
Therefore,
\[
\th\left( \frac{az+b}{cz+d} \right) = (-1)^\nu \ep_d^{-n} (cz+d)^{\nu+n/2} \sum_{\bxi \in \Zn} P(\bxi)^\nu e(z|\bxi|^2)
\]
Recall that $\nu$ is even, so $(-1)^\nu = \ep_d^{-2\nu} = 1$, and
\[
\th\left( \frac{az+b}{cz+d} \right) = \ep_d^{-2\nu-n} (cz+d)^{\nu+n/2} \sum_{\bxi \in \Zn} P(\bxi)^\nu e(z|\bxi|^2) = j(\g, z)^{2\nu + n} \th(z)
\]

It is clear that $\th$ is a cusp form, because the sum in (\ref{Poissonsummod}) is absolutely convergent and decays rapidly as $\Im z \goto +\oo$ for all $\g \in SL_2(\Z)$.
\end{proof}

\section{Appendix B: Better Results with New Exponent Pairs}

Through personal communication, Chamizo pointed out to the author that the exponent 83/64 in the estimate
\[
\sum_{\bx \in \Z^3 \atop |\bx| \le R} P(\bx) = O_{\ep,P}(R^{\nu + 83/64 + \ep})
\]
still has some room for improvement using the new exponent pairs recently obtained by Huxley \cite{Hu}. Exponent pairs are a useful tool in estimating exponential sums of the following type
\[
\sum_{n \asymp N} e(f(n))
\]

More precisely, suppose $f$ behaves sufficiently close to a power function $x^{-s}$, in the sense that there is a constant $c > 0$ such that for all $r \in \N^*$,
\[
\frac{d^r}{dx^r} f(x) \sim c \frac{d^r}{dx^r} x^{-s}
\]
then for every $\ep > 0$, $P > 0$, and $N > N_{\ep,P}$, $f$ is in the class $\mathbf F(N, P, s, c, \ep)$ as defined in P 30, \cite{GK}. Exponent pairs allow us to give estimates to the above exponential sum.

\begin{defi}(P 30, \cite{GK})
For $0 \le k \le 1/2 \le l \le 1$, $(k, l)$ is an exponent pair if we have the following estimate
\[
\sum_{n \asymp N} e(f(n)) \ll_s (cN^{-s-1})^k N^l + c\inv N^{s+1}
\]
\end{defi}
\begin{rem}
If $f' \asymp cN^{-s-1} \gg 1$, then the second term is dominated by the first term.
\end{rem}
\begin{ex}
By the triangle inequality, $(k, l) = (0, 1)$ is an exponent pair.
\end{ex}
Exponent pairs are abstractions of the Van der Corput $A$ and $B$ processes as carried out in the main part of the paper. Specifically, if $(k, l)$ is an exponent pair, then by applying Weyl differencing (and optimization of the length) we can get another exponent pair
\[
A(k, l) = \left( \frac{k}{2k + 2}, \frac{k + l + 1}{2k + 2} \right)
\]
On the other hand, an application of Poisson summation and stationary phase (the $B$-process) will give as the pair
\[
B(k, l) = (l - 1/2, k + 1/2)
\]
For details and proofs we refer the reader to \cite{GK}.

\begin{ex}
From $(k, l) = (0, 1)$ we can get $B(k, l) = (1/2, 1/2)$ and $AB(k, l) = (1/6, 2/3)$, which are basically the pairs we used to bound $V_{N,D}(t,\xi,R)$.
\end{ex}

From Van der Corput $A$ and $B$ processes we can get a host of exponent pairs. This is, however, not the whole story. Bombieri and Iwaniec (\cite{BI}), using large sieve inequalities, obtained a new exponent pair $(k, l) = (9/56 + \ep, 37/56 + \ep)$ which is unreachable from $A$ and $B$ processes. Their results were subsequently improved by Huxley, giving one more exponent pair $(32/205 + \ep, 269/410 + \ep)$ in \cite{Hu}.

New exponent pairs can offer further optimizations. In \cite{CC}, Chamizo and Crist\'obal applied the exponent pair $(k, l) = BA^2(32/205 + \ep, 269 + 410 + \ep)$ to the following sum
\[
V_{N,D}(t,\xi;R) = \sum_{y \asymp D} e(G(\xi, y, t))
\]
which occurs in the proof of Lemma \ref{V_NUyR}. The result is Proposition 3.6 of \cite{CC}, which in our notation says
\begin{lem}\label{NewV_NUyR}
If $(k, l)$ is an exponent pair, $R \ll N^2$ and $1 \le D \ll N$, then
\[
\sum_{y \asymp D} V_{N,U}(y;R) \ll_\ep RD^{\frac{3k + l + 3}{2k + 2}} N^{\frac{-2k - 3}{2k + 2}} + R^{1/2 + \ep} D^{3/2} N^{-3/4}
\]
\end{lem}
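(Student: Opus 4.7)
The plan is to mimic the proof of Lemma \ref{V_NUyR}, replacing the two ad hoc Van der Corput estimates used there with a single application of the hypothetical exponent pair $(k,l)$. First I would carry out Weyl differencing of $V_{N,U}(y;R)$ with shift length $T\in[1,U]$ exactly as before: expand, apply Cauchy--Schwarz, and complete the square in the inner variable to obtain
\[
T^2\sum_{y\asymp D}|V_{N,U}(y;R)|^2 \ll U^2TD + U^2T^2\max_{1\le|t|\le T}|V_{N,D}(t,\xi;R)|,
\]
where $V_{N,D}(t,\xi;R)=\sum_{y\asymp D}e(G(\xi,y,t))$ with $G=g(\xi+t,y)-g(\xi,y)$. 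Crucially, the sign-definite derivative estimates $\de_y^r G\asymp tND^{-r}$ for $r=1,2,3$ established in \cite{CI} and Corollary 3.5 of \cite{CC} carry over unchanged.

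These derivative estimates place $G$ in the class $\mathbf F$ of the preceding discussion with size parameter $F\asymp tN$, so the defining property of an exponent pair gives
\[
|V_{N,D}(t,\xi;R)| \ll (tN/D)^{k}D^{l} + D/(tN);
\]
since $tN\gg D$ throughout the relevant range, the second summand is swallowed, leaving $(tN)^{k}D^{l-k}$. Substituting back and dividing by $T^2$ yields
\[
\sum_{y\asymp D}|V_{N,U}(y;R)|^2 \ll U^2\bigl(T^{-1}D + T^{k}N^{k}D^{l-k}\bigr).
\]
I would then optimize over $T$: balancing the two terms gives $T=D^{(k+1-l)/(k+1)}N^{-k/(k+1)}$, and, when admissible, produces $\sum|V|^2\ll U^2 D^{l/(k+1)}N^{k/(k+1)}$. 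A single Cauchy--Schwarz step $(\sum|V|)^2\le D\sum|V|^2$, followed by the substitution $U=RDN^{-3/2}$, then delivers the main claimed term $RD^{(3k+l+3)/(2k+2)}N^{-(2k+3)/(2k+2)}$ after a short exponent computation.

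When the optimal $T$ falls outside $[1,U]$, I would instead choose $T=U$; the diagonal contribution $U^2TD/T^2=UD=RD^{2}N^{-3/2}$, again passed through Cauchy--Schwarz, produces precisely the secondary term $R^{1/2+\ep}D^{3/2}N^{-3/4}$ in the stated bound, while the $R^{\ep}$ factor absorbs the logarithmic losses from the dyadic decomposition needed to reach a single dyadic scale in $y$. The step I expect to be the main obstacle is the bookkeeping of this case analysis: for each exponent pair $(k,l)$ and each $D\in[1,N]$ one must verify that the optimal $T$ either lies in $[1,U]$---so that the first claimed term is attained---or else produces a bound dominated by the secondary $R^{1/2+\ep}D^{3/2}N^{-3/4}$. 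In Lemma \ref{V_NUyR} this was done by hand with an explicit regime table; here the value of the exponent-pair formulation is that the optimization formally absorbs all regimes into a single clean estimate, but checking that no marginal regime escapes either term still requires comparing $T$ against $U$ and against $1$ uniformly in $(k,l)$.
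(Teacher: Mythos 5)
First, a point of comparison: the paper itself does not prove this lemma --- it is imported verbatim as Proposition 3.6 of \cite{CC} --- so your reconstruction is measured against the route the paper only sketches, namely Weyl differencing in $\xi$ followed by one application of the exponent pair $(k,l)$ to the inner sum over $y$. That is exactly what you do, and your exponent arithmetic is right: from $\sum_{y\asymp D}|V_{N,U}(y;R)|^2 \ll U^2\bigl(T^{-1}D + T^kN^kD^{l-k}\bigr)$, balancing gives $T_{\mathrm{opt}} = D^{(k+1-l)/(k+1)}N^{-k/(k+1)}$ and $\sum_y|V|^2 \ll U^2D^{l/(k+1)}N^{k/(k+1)}$, and Cauchy--Schwarz together with $U \asymp RDN^{-3/2}$ gives $RD^{(3k+l+3)/(2k+2)}N^{(-2k-3)/(2k+2)}$; when $T_{\mathrm{opt}} \ge U$, taking $T=U$ gives $\sum_y|V|^2 \ll UD$ and hence $R^{1/2}D^{3/2}N^{-3/4}$. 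In that last case you should add the one-line check that the off-diagonal term is dominated: $T\mapsto T^{-1}D$ is decreasing and $T\mapsto T^kN^kD^{l-k}$ is nondecreasing, so at $T=U\le T_{\mathrm{opt}}$ the diagonal term is the larger one.

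Two loose ends remain. (1) ``Optimal $T$ outside $[1,U]$'' has two sides, and for $T_{\mathrm{opt}}<1$ (small $D$, $k>0$) the choice $T=U$ is the wrong one --- the off-diagonal term then need not be dominated. The correct move is $T=1$, i.e.\ the trivial bound $\sum_y|V|^2\ll U^2D$, giving $\sum_y V_{N,U}(y;R) \ll UD = RD^2N^{-3/2}$; this is dominated by the first claimed term precisely because the ratio of the two equals $T_{\mathrm{opt}}^{1/2}<1$ in this regime, so the lemma still holds, but the case must be closed this way and not by the $T=U$ computation you describe. (2) The derivative estimates $\de_y^r G \asymp tND^{-r}$ for $r=1,2,3$ suffice for the specific pairs $(1/2,1/2)$ and $(1/6,2/3)$ used in Lemma \ref{V_NUyR}, but to invoke a general exponent pair you need $G(\xi,\cdot,t)$ to lie in the class $\mathbf F(D,P,s,c,\ep)$ with $F\asymp tN$, i.e.\ power-function asymptotics for derivatives of every order (or at least every order used in constructing the pair); these do hold for the phase at hand and are what Corollary 3.5 of \cite{CC} supplies, but citing only three derivatives understates what the exponent-pair hypothesis requires.
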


Plugging Lemma \ref{NewV_NUyR} into the final lines of Lemma \ref{sumxy} we get
\begin{align*}
V^\circ_{N,D}(R) &\ll R^{-1/2} D^{-1/2} N^{5/4} \sum_{y \asymp D} V_{N,U}(y;R) + R^{-1/2} D^{1/2} N^{5/4} + N\log R\\
&\ll_\ep N^{3/2 + \ep} + R^{1/2} D^{\frac{2k + l + 2}{2k + 2}} N^{\frac{k - 1}{4k + 4}}
\end{align*}
which implies the same estimate for $V_{N,D}(R)$. Finally, backtracing to Lemma \ref{V_NQR}, we get
\begin{equation}\label{NewV_NQR}
|V^\circ_{N,Q,\bh}(R)|^2 \ll_{\ep,Q} N^{\nu + \ep} (N^3 Y\inv + R^{1/2} N^{2 + \frac{k + l - 1}{4k + 4}} Y^{\frac{l}{2k + 2}})
\end{equation}

Balancing the two term gives
\[
Y_0 = N^{\frac{3k - l + 5}{4k + 2l + 4}} R^{-\frac{k + 1}{2k + l + 2}}
\]
Comparing $Y_0$ with $\sqrt N$ gives
\[
N_0 = R^{\frac{2k + 2}{k - 2l + 3}}
\]
Therefore when $N \gg N_0$, $Y_0 \gg \sqrt N$, so we take $Y_0 \asymp \sqrt N$ and the first term in (\ref{NewV_NQR}) dominates. Otherwise, when $N \ll N_0$, we can balance the two terms in (\ref{NewV_NQR}) by setting $Y = Y_0$ to get
\[
|V^\circ_{N,Q,\bh}(R)|^2 \ll_{\ep,Q} N^{\nu + \ep} R^{\frac{k + 1}{2k + l + 2}} N^{2 + \frac{k + 3l + 1}{4k + 2l + 4}}
\]
Combining the two estimates and adding up dyadic intervals we conclude
\[
|V_{N,Q,\bh}(R)| \ll_{\ep,Q} N^{\nu/2 + \ep} (N^{5/4} + R^{\frac{k + 1}{4k + 2l + 4}} N^{1 + \frac{k + 3l + 1}{8k + 4l + 8}})
\]
which improves Theorem \ref{S_fPR} to
\begin{thm}\label{NewS_fPR}
If $f$, $P$ and $R$ are as in Theorem \ref{S_fPR}, then
\[
S_{f,P}(R) = \int_{\R^3} P(\bx) g(\bx) d\bx + O_{\ep,P} R^\nu H^{-\ep} (RH^{-1/2} + R^{1 + \frac{k + 1}{4k + 2l + 4}} H^{-\frac{k + 3l - 1}{4k + 2l + 4}})
\]
\end{thm}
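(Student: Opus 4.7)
The plan is to follow the proof of Theorem \ref{S_fPR} verbatim, substituting the improved bound
\[
|V_{N,Q,\bh}(R)| \ll_{\ep,Q} N^{\nu/2 + \ep}\bigl(N^{5/4} + R^{\frac{k + 1}{4k + 2l + 4}} N^{1 + \frac{k + 3l + 1}{8k + 4l + 8}}\bigr)
\]
derived just before the theorem statement (from Lemma \ref{NewV_NUyR} via the same Cauchy--Schwarz/Weyl-differencing argument that produced the old Lemma \ref{V_NQR}) for every invocation of Lemma \ref{V_NQR}.

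The starting point is unchanged. The Poisson summation expansion of Lemma \ref{Poissonsum} produces the main term $\int_{\R^3} P(\bx) g(\bx)\,d\bx$ together with two families of exponential series, and the tail-convergence analysis of Lemma \ref{sumxi} carries over verbatim since the new bound, like the old, is $O_{\ep,Q}(N^{\nu/2+5/4+\ep})$ uniformly in $\bh$. The $\nu_1\ge 1$ pieces of the $\sin \cd \cos$ family absorb into $O_{\ep,P}(R^{\nu+\ep})$ exactly as before, using $H\le 1$ and $2R+H\ll R$. This reduces matters to the $\nu_1=0$ portion, whose prefactor is $RH^{-1}(2R+H)^{\nu_2}\ll R^{\nu_2+1}H^{-1}$, with the worst case $\nu_2=\nu$.

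Next, I would redo the two-range estimates exactly as in Theorem \ref{S_fPR}, but using the new bound. On $0<|\xi|\le H^{-1}/2$, Abel summation in $N=|\xi|^2$ together with $|\sin(\pi H|\xi|)|\ll H|\xi|$ produces the weight $HN^{-2-\nu/2}$ on $|V_{N,P,\bh}(R)|$; inserting the new bound yields
\[
H\!\!\sum_{1\le N\ll H^{-2}}\!\bigl(N^{-3/4+\ep} + R^{\frac{k+1}{4k+2l+4}} N^{-1+\frac{k+3l+1}{8k+4l+8}+\ep}\bigr),
\]
with each piece dominated by the endpoint $N\asymp H^{-2}$ (valid since the second exponent exceeds $-1$). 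On $|\xi|>H^{-1}/2$, Abel summation with $|\sin(\pi H|\xi|)|\le 1$ produces instead the weight $N^{-5/2-\nu/2}$, and the resulting tail-convergent sums are again dominated at $N\asymp H^{-2}$, giving the same order of magnitude. Multiplying by the prefactor $R^{\nu+1}H^{-1}$ and closing the argument via the Parseval/continuity step at the end of Theorem \ref{S_fPR}, which transfers $S_{f,P}(\bh;R)$ to $S_{f,P}(0;R) = S_{f,P}(R)$, produces the claimed estimate.

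The main obstacle is purely bookkeeping: tracking the exponents of $R$ and $H$ through the two endpoint-dominated power sums and verifying that they agree with the displayed expression, as well as ensuring that the mild conditions $0\le k\le 1/2\le l\le 1$ for an exponent pair are enough to give the correct signs (so that the term with $H^{-\ep}$ absorbs the $\ep$-losses). No new analytic input is required: Theorem \ref{NewS_fPR} is obtained by running the machinery of Theorem \ref{S_fPR} with the exponent pair $(k,l)$ left as a free parameter all the way through, and recovering the previous theorem by specializing the pair to what is effectively $AB(0,1)=(1/6,2/3)$ in the Weyl-differencing step that produced the old Lemma \ref{V_NUyR}.
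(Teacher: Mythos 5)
Your plan is structurally the same as the paper's: the paper proves Theorem \ref{NewS_fPR} exactly by feeding the generalized bound on $V_{N,Q,\bh}(R)$ (obtained from Lemma \ref{NewV_NUyR} and the re-balanced Weyl differencing) into the unchanged machinery of Lemma \ref{sumxi} and Theorem \ref{S_fPR}, and your treatment of the two ranges $|\xi|\le H^{-1}/2$, $|\xi|>H^{-1}/2$, the endpoint domination, and the Parseval/continuity step is all correct in outline. However, the one step you defer -- ``verifying that the exponents agree with the displayed expression'' -- is precisely where your write-up fails as stated: the input bound you quote, $|V_{N,Q,\bh}(R)|\ll_{\ep,Q} N^{\nu/2+\ep}(N^{5/4}+R^{\frac{k+1}{4k+2l+4}}N^{1+\frac{k+3l+1}{8k+4l+8}})$, does not produce the theorem. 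Running your own bookkeeping with it, the weight $HN^{-2-\nu/2}$ summed to $N\asymp H^{-2}$ and multiplied by the prefactor $R^{\nu+1}H^{-1}$ gives $R^{\nu+1+\frac{k+1}{4k+2l+4}}H^{-\frac{k+3l+1}{4k+2l+4}}$, i.e.\ the $H$-exponent has numerator $k+3l+1$ rather than the claimed $k+3l-1$ (a strictly weaker bound since $H\le 1$). The correct intermediate bound has $N^{1+\frac{k+3l-1}{8k+4l+8}}$: balancing $N^{3}Y^{-1}$ against $R^{1/2}N^{2+\frac{k+l-1}{4k+4}}Y^{\frac{l}{2k+2}}$ at $Y_0=N^{\frac{3k-l+5}{4k+2l+4}}R^{-\frac{k+1}{2k+l+2}}$ yields $|V^\circ_{N,Q,\bh}(R)|^2\ll N^{\nu+\ep}R^{\frac{k+1}{2k+l+2}}N^{2+\frac{k+3l-1}{4k+2l+4}}$, and only with this exponent does the specialization $(k,l)=(1/2,1/2)$ recover $N^{15/14}R^{3/14}$ of Lemma \ref{V_NQR} and hence $R^{17/14}H^{-1/7}$ of Theorem \ref{S_fPR}; with your version that consistency check fails ($N^{17/14}R^{3/14}$ and $H^{-3/7}$). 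To be fair, the same $+1$ misprint appears in the paper's own display preceding the theorem, so you may have inherited it, but a proof must either correct it or it proves a weaker statement than the one asserted.

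Two smaller points. First, your closing remark that Theorem \ref{S_fPR} is recovered by specializing to $AB(0,1)=(1/6,2/3)$ is wrong; the recovering pair is $B(0,1)=(1/2,1/2)$ (the $(1/6,2/3)$ pair enters only as the second argument of the min in the old Lemma \ref{V_NUyR}, and it is the $(1/2,1/2)$ branch that governs the dominant term). Second, when you claim Lemma \ref{sumxi} and the endpoint dominations carry over, it is worth recording the inequalities that make this true for a general exponent pair $0\le k\le 1/2\le l\le 1$: one needs $0<\frac{k+3l-1}{8k+4l+8}<\frac12$ (endpoint domination in the first range and convergence in the tail), both of which follow from $l\ge 1/2$ and $l<3k+5$; with the corrected exponent these checks go through and the rest of your argument matches the paper's.
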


Now we balance the error terms in Theorem \ref{NewS_fPR} and Theorem \ref{S_fPRH} using Lemma 2.4 of \cite{GK} to obtain
\begin{equation}\label{exppair}
\sum_{\bx \in \Z^3 \atop |\bx| \le R} P(\bx) = O_{\ep,P}(R^{\nu+1+\ep} (R^{7/24} + R^{\frac{15k + 21l + 1}{40k + 40l + 24}}))
\end{equation}

Plugging in the exponent pair $(k, l) = BA^2(32/205 + \ep, 269 + 410 + \ep) = (743/2024 + \ep, 269/506 + \ep)$ we get the dominant exponent
\[
\nu + 1 + 35765/121336 + \ep
\]
which is a further improvement on the previous exponent $\nu + 83/64 + \ep$. The latter can in turn be recovered by taking the well known exponent pair $(k, l) = (1/2, 1/2) = B(0, 1)$.

\section{Appendix C: Summary of Proved and Conjectured $\th_\nu$}

This section summarizes all the proved and conjectured exponents mentioned in Section 2 (and some more). For simplicity, we have omitted the normalizing factors $R^\nu$, the $O$ symbol and all $\ep$'s in the exponents. References and decimal values are included in parentheses. The ellipsis refers to terms that are clearly dominated by the main term, and question marks indicate conjectures. The last column refers to the optimal value of $\a$ such that setting $H = R^\a$ gives the desired error bound. The reader is referred to Remark \ref{Acronyms} for meaning of the acronyms and to Section 2 for a detailed account of relevant terminology. 
{\small
\[
\begin{matrix}
\rm{Long\ sum\ estimates} & \rm{Short\ sum\ estimates} & \th_\nu & \frac{\log H}{\log R} & \rm{Applicability}\\

RH\inv\ (\rm{Van\ der\ Corput}) & R^2H \text{ (Trivial)} & 3/2 & -1/2 & \all P\\

 & & (1.5) & (-.5)\\

RH^{-1/2} + \dots & R^2H \text{ (Trivial)} & 4/3 & -2/3 & \all P\\

\text{(\cite{Ch} and \cite{Vi})} & & (1.33333) & (-.66667)\\

RH^{-1/2} + R^{21/16} & R^{15/8} H^{7/8} + \dots & 29/22 & -7/11 & P = 1\\

+ R^{11/8} H^{-1/8} \text{ (\cite{CI})} & \text{ (\cite{CI})} & (1.31818) & (-.63636)\\

RH^{-1/2} + R^{21/16} & R^{11/6} H^{5/6} + \dots & 21/16 & -5/8 & P = 1\\

+ R^{11/8} H^{-1/8} \text{ (\cite{CI})} & \text{ (\cite{HB})} & (1.3125) & (-.625)\\

RH^{-1/2} + R^{17/14} H^{-1/7} & R^{15/8}H + R & 83/64 & -37/64 & \int_{S^2} P = 0\\

\text{(Theorem \ref{S_fPR})} & \text{(Theorem \ref{S_fPRH})} & (1.29688) & (-.57813)\\

RH^{-1/2} + R^{\frac{15987}{13220}} H^{\frac{-1947}{13220}} & R^{15/8}H + R & \frac{157101}{121336} & -\frac{17601}{30334} & \int_{S^2} P = 0\\

\text{(Theorem \ref{NewS_fPR} with \cite{Hu})} & \text{(Theorem \ref{S_fPRH})} & (1.29476) & (-.58024)\\

RH^{-1/2} + R^{6/5} H^{-1/10}?? & R^{15/8}H + R & 31/24?? & -7/12?? & \int_{S^2} P = 0\\

\text{(Theorem \ref{NewS_fPR} with LEP)} & \text{(Theorem \ref{S_fPRH})} & (1.29167) & (-.58333)\\

RH^{-1/2} + R^{17/14} H^{-1/7} & R^{3/2} H^{1/2}?? & 23/18?? & -4/9?? & P = 1\\

\text{(Theorem \ref{S_fPR})} & \text{(GLH)} & (1.27778) & (-.44444)\\

RH^{-1/2} + R^{6/5} H^{-1/10}?? & R^{3/2} H^{1/2}?? & 5/4?? & -1/2?? & P = 1\\

\text{(Theorem \ref{NewS_fPR} with LEP)} & \text{(GLH)} & (1.25) & (-.5)\\

RH^{-1/2} + R^{17/14} H^{-1/7} & R^{3/2}H?? & 5/4?? & -1/4?? & \int_{S^2} P = 0\\

\text{(Theorem \ref{S_fPR})} & \text{(RC)} & (1.25) & (-.25)\\

RH^{-1/2} + R^{\frac{1454}{1217}} H^{\frac{-461}{2434}} & R^{3/2}H?? & \frac{7199}{5710}?? & -\frac{743}{2895}?? & \int_{S^2} P = 0\\

\text{(Theorem \ref{NewS_fPR} with \cite{Hu})} & \text{(RC)} & (1.24662) & (-.25338)\\

RH^{-1/2} + R^{6/5} H^{-1/10}?? & R^{3/2}H?? & 27/22?? & -3/11?? & \int_{S^2} P = 0\\

\text{(Theorem \ref{NewS_fPR} with LEP)} & \text{(RC)} & (1.22727) & (-.27273)\\

\text{Ultimate Conjecture} & \text{i.e. Conjecture \ref{GaussConj}:} & 1???? & ???? & \all P
\end{matrix}
\]
}

\begin{rem}\label{Acronyms}
Acronyms:

LEP = Lindel\"of Exponent Pair Conjecutre \cite{CC}.

GLH = Generalized Lindel\"of Hypothesis \cite{CC}.

RC = Ramanujan Conjecture \cite{Sa}.
\end{rem}

\section{Acknowledgements}
The author wish to express his thanks to MIT for offering such an incredible program as SPUR, to Prof. David Jerison for suggesting this marvellous topic, to Prof. Etingof and Prof. Fox for meticulously overseeing the progress of the project, to Chenjie Fan for his very helpful mentoring over the six weeks, and to Prof. Chamizo for enlightening and encouraging correspondence.

Speaking of the technicalities, credits go to Prof. Etingof for opening the door of modular forms to me, to Prof. Fox for suggesting some connections to combinatorics, to Chenjie Fan for pointing the direction to the proof of Lemma \ref{decomp} and justification of the convergence of the Poisson summation formula, and for helping the author with some \LaTeX ing problems. Thanks are also due to Prof. Chamizo for bringing to my attention the latest results in the theory of exponent pairs.
\bibliographystyle{plain}
\bibliography{Gauss}
\end{document}